\documentclass{amsart}
\usepackage[left=2.45cm, top=2.45cm,bottom=2.45cm,right=2.45cm]{geometry}
\usepackage{amsfonts}
\usepackage{latexsym}
\usepackage{amssymb}
\usepackage{amsmath}
\usepackage{color}
\usepackage{bbm}
\usepackage{tikz}
\usepackage{enumerate}
\usepackage{mathrsfs}


\newcommand{\R}{\mathbb R}
\newcommand{\N}{\mathbb N}

\newcommand{\E}{\mathbb E}
\newcommand{\rate}{\mathbb I}
\newcommand{\Pro}{\mathbb P}

\newcommand{\Var}{\mathrm{Var}}
\newcommand{\Cov}{\mathrm{Cov}}

\def\dint{\textup{d}}
\newcommand{\SSS}{\ensuremath{{\mathbb S}}}

\newcommand{\B}{\ensuremath{{\mathbb B}}}

\newcommand{\todistr}{\overset{d}{\underset{n\to\infty}\longrightarrow}}
\newcommand{\toprob}{\overset{\Pro}{\underset{n\to\infty}\longrightarrow}}


\DeclareMathOperator{\vol}{vol}


\newtheorem{thm}{Theorem} 

\newtheorem{lemma}[thm]{Lemma}

\newtheorem{rmk}[thm]{Remark}
\newtheorem{thmalpha}{Theorem}

\def\bC{\mathbf{C}}

\def\bP{\mathbf{P}}

\def\bU{\mathbf{U}}

\def\bW{\mathbf{W}}
\def\bX{\mathbf{X}}
\def\bY{\mathbf{Y}}

\def\XX{\mathbb{X}}
\def\YY{\mathbb{Y}}

\usepackage{nomencl}
\makenomenclature

\setlength\parindent{0pt}

\begin{document}


\title[Limit theorems for random vectors in $\ell_p^n$-balls]{High-dimensional limit theorems\\ for random vectors in $\ell_p^n$-balls. II}

\author[Z. Kabluchko]{Zakhar Kabluchko}
\address{Institut f\"ur Mathematische Stochastik, Westf\"alische Wilhelms-Universit\"at M\"unster, Germany} \email{zakhar.kabluchko@uni-muenster.de}

\author[J. Prochno]{Joscha Prochno}
\address{Institut f\"ur Mathematik \& Wissenschaftliches Rechnen, Karl-Franzens-Universit\"at Graz, Austria} \email{joscha.prochno@uni-graz.at}

\author[C. Th\"ale]{Christoph Th\"ale}
\address{Faculty of Mathematics, Ruhr University Bochum, Germany} \email{christoph.thaele@rub.de}

\keywords{Asymptotic geometric analysis, central limit theorem, convex bodies, $\ell_p^n$-balls, large deviations principle, moderate deviations principle, stochastic geometry}
\subjclass[2010]{Primary: 60F10, 52A23 Secondary: 60D05, 46B09}

\begin{abstract}
In this article we prove three fundamental types of limit theorems for the $q$-norm of random vectors chosen at random in an $\ell_p^n$-ball in high dimensions. We obtain a central limit theorem, a moderate deviations as well as a large deviations principle when the underlying distribution of the random vectors belongs to a general class introduced by Barthe, Gu\'edon, Mendelson, and Naor. It includes the normalized volume and the cone probability measure as well as projections of these measures as special cases. Two new applications to random and non-random projections of $\ell_p^n$-balls to lower-dimensional subspaces are discussed as well. The text is a continuation of [Kabluchko, Prochno, Th\"ale: High-dimensional limit theorems for random vectors in $\ell_p^n$-balls, Commun. Contemp. Math. (2019)].
\end{abstract}

\maketitle

\section{Introduction and main results}

The study of high-dimensional geometric structures and particularly of convex bodies
has received considerable attention in the last decade. In parts, this was triggered by modern applications
in high-dimensional statistics, machine learning, and numerical analysis. Many of the deep discoveries are of a probabilistic
flavor or have been obtained by means of novel and powerful probabilistic methods.
It therefore comes as no surprise that (central) limit theorems
have been obtained for various quantities that appear in high-dimensional stochastic geometry or the asymptotic theory of convex bodies. Probably the first high-dimensional
central limit theorem is known as the Poincar\'e-Maxwell-Borel Lemma (see, e.g., \cite{DiaconisFreedman,Stam82}).
It shows that the distribution of the first $k$ coordinates of a point chosen
uniformly at random from the $n$-dimensional Euclidean ball or sphere converges
to a $k$-dimensional Gaussian distribution, as the dimension $n$ of the ambient space tends to infinity. The
most prominent result of the past $15$ years is arguably Klartag's central limit theorem for isotropic convex bodies \cite{KlartagCLT}, showing that most $k$-dimensional marginals of random points chosen uniformly at random from a convex body are approximately Gaussian. Many more deep central limit phenomena have been discovered in the recent past. Among others, there is a central limit theorem for the volume of
convex hulls of Gaussian random vectors obtained by B\'ar\'any and Vu in \cite{BV2007} or Reitzner's central limit theorems for the volume and the number of $i$-dimensional faces of random polytopes in smooth convex bodies \cite{R2005} that were obtained when the number of random points tends to infinity (see also  B\'ar\'any and Th\"ale \cite{BaranyThaele} and Th\"ale, Turchi, and Wespi \cite{ThaeleTurchiWespi} for results  about general intrinsic volumes).
There is a central limit theorem due to Paouris, Pivovarov, and Zinn \cite{PPZ14} for the volume of $k$-dimensional random projections of the $n$-dimensional cube when $n\to\infty$, a result that had previously been obtained by
Kabluchko, Litvak, and Zaporozhets \cite{KLZ15pomi} in the special case $k=1$. Alonso-Guti\'errez, Prochno, and Th\"ale \cite{APT2018} proved a central limit theorem and Berry-Esseen bounds for the Euclidean norm of random orthogonal projections of points chosen uniformly at random from the unit ball of $\ell_p^n$, as $n\to\infty$, and Kabluchko, Prochno, and Th\"ale \cite{KPT2019} obtained a multivariate central limit theorem for the $q$-norm of random vectors chosen uniformly at random in the unit $p$-ball of $\R^n$, which extended the corresponding $1$-dimensional result obtained by Schmuckenschl\"ager \cite{Schmu2001}.

While the results in the previous paragraph describe central limit phenomena for several geometry related quantities, there is considerably less known about the large deviations behavior. Large deviations principles, which appear on the scale of a law of large numbers, have only recently been introduced in geometric functional analysis by Gantert, Kim, and Ramanan \cite{GKR2017}, who obtained a large deviations principle for $1$-dimensional random projections of $\ell_p^n$-balls in $\R^n$, as the space dimension tends to infinity. Subsequent work of Alonso-Guti\'errez, Prochno, and Th\"ale \cite{APT2018} provided a description of the large deviations behavior for the Euclidean norm of projections of $\ell_p^n$-balls to high-dimensional random subspaces (the so-called annealed case), and Kabluchko, Prochno, and Th\"ale \cite{KPT2019} obtained a complete description of the large deviations behavior of $\ell_q$-norms of high-dimensional random vectors that are chosen uniformly at random in an $\ell_p^n$-ball, which can be seen as an asymptotic version of a result of Schechtman and Zinn \cite{SchechtmanZinn}.

The motivation for this manuscript is essentially three-fold and we shall discuss the details
in the following subsections together with our corresponding results. The first is the aim for an extension of the (multivariate) central limit theorems obtained in \cite[Theorem 1.1]{KPT2019} and \cite[Proposition 2.4]{Schmu2001} and the large deviations principles \cite[Theorems 1.2 and 1.3]{KPT2019} to a considerably wider class of distributions on $\ell_p^n$-balls. The second aim is to go between the Gaussian fluctuations described by the central limit theorem and the large deviations and to describe the moderate deviations behavior of the random variables studied there. Moderate deviations are typically non-parametric (in contrast to large deviations) and consider probabilities on scales between those of a law of large numbers and a central limit theorem. These new findings for the moderate scaling therefore complement and refine both the new central limit theorems (Theorem \ref{thm:CLT} and Theorem \ref{thm:CLT general Version2}) as well the new large deviations principle (Theorem \ref{thm:LDP}). For a variety of applications of such results, despite the once presented below, we refer the reader to \cite{KPT2019}.

Before we present our results, let us explain the distributional set-up of this manuscript. As already mentioned, we consider a much more general class of distributions compared to \cite{KPT2019} and \cite{Schmu2001}. Those have been introduced and studied by Barthe, Gu\'edon, Mendelson, and Naor \cite{BartheGuedonEtAl}, and are closely related to the geometry of $\ell_p^n$-balls. This class contains the uniform distribution considered in \cite{APT2018,GKR2017,KPT2019}, the cone probability measure on the $\ell_p^n$-unit ball $\B_p^n :=\{x\in\R^n: \|x\|_p\leq 1\}$ as special cases, and many more (see below). As usual, $\|x\|_p = (|x_1|^p+\ldots +|x_n|^p)^{1/p}$ denotes the $\ell_p$-norm of the vector $x=(x_1,\ldots,x_n)$, and the parameter $p$ satisifes $0< p<\infty$.  For every $n\in\N$, we let $\bW_n$ be any Borel probability measure on $[0,\infty)$, $\bU_{n,p}$ be the uniform distribution, and $\bC_{n,p}$ be the cone probability measure on $\B_p^n$. The distributions we consider are of the form
\begin{equation}\label{eq:DefMeasurePnpW}
\bP_{\bW_n,n,p} := \bW_n(\{0\})\,\bC_{n,p} + H\,\bU_{n,p},
\end{equation}
where the function $H:\B_p^n\to\R$ is given by $H(x)=h(\|x\|_p)$ with
$$
h(r) = {1\over p^{n/p}\Gamma\big(1+{n\over p}\big)}{1\over (1-r^p)^{1+n/p}}\int_0^\infty s^{n/p}e^{-{1\over p}{s r^p (1-r^p)^{-1}}}\,\bW_n(\dint s),\qquad r\in[0,1].
$$
In other words this means that
\begin{align*}
\int_{\B_p^n}f(x)\,\bP_{\bW_n,n,p}(\dint x) &= \bW_n(\{0\})\int_{\SSS_p^{n-1}}f(x)\,\bC_{n,p}(\dint x) + \int_{\B_p^n}f(x)\,H(x)\,\bU_{n,p}(\dint x)\\
&=\bW_n(\{0\})\int_{\SSS_p^{n-1}}f(x)\,\bC_{n,p}(\dint x) + \int_{\B_p^n}f(x)\,h(\|x\|_p)\,\bU_{n,p}(\dint x)
\end{align*}
for all non-negative measurable functions $f:\B_p^n\to\R$, where $\SSS_p^{n-1}=\{x\in\R^n:\|x\|_p=1\}$ denotes the $\ell_p^n$-sphere. The class of measures of the form $\bP_{\bW_n,n,p}$ contains the following important cases, which are of particular interest (see Theorem 1, Theorem 3, Corollary 3, and Corollary 4 in \cite{BartheGuedonEtAl}):
\begin{itemize}
\item[(i)] If $\bW_n$ is the exponential distribution with rate $1/p$ (and mean $p$), then $\bW_n(\{0\})=0$, $H\equiv 1$, and $\bP_{\bW_n,n,p}$ reduces to the uniform distribution $\bU_{n,p}$ on $\B_p^n$.
\item[(ii)] If $\bW_n=\delta_0$ is the Dirac measure concentrated at $0$, then $\bW_n(\{0\})=1$, $H\equiv 0$, and $\bP_{\bW_n,n,p}$ is just the cone probability measure on $\B_p^n$.
\item[(iii)] If $\bW_n={\rm Gamma}(\alpha,1/p)$ is a gamma distribution with shape parameter $\alpha>0$ and rate $1/p$, then $\bP_{\bW_n,n,p}$ is the beta-type probability measure on $\B_p^n$ with Lebesgue density given by
$$
x\mapsto
{\Gamma\big(\alpha+{n\over p}\big)\over\Gamma(\alpha) \big(2 \Gamma\big(1+{1\over p}\big)\big)^n}\,\big(1-\|x\|_p^p\big)^{\alpha-1},
\qquad x\in\B_p^n\,.
$$
In particular, if $\alpha=m/p$ for some $m\in\N$, this is the image of the cone probability measure $\bC_{n+m,p}$ on $\B_p^{n+m}$ under the orthogonal projection onto the first $n$ coordinates. Similarly, if $\alpha=1+m/p$, this distribution arises as the image of the uniform distribution $\bU_{n+m,p}$ on $\B_p^{n+m}$ under the same orthogonal projection.
\end{itemize}
After having discussed the class of distributions we consider, we now turn to our main results.

\begin{rmk}
\rm Note that although for $0<p<1$ the unit balls $\B_p^n$ are not convex, we decided to include them into our analysis, simply because our results are valid in this regime as well. On the other hand, we leave out the case $p=\infty$, since in this case we can only treat the uniform distribution on $\B_\infty^n$ and this was already studied in \cite{KPT2019}.
\end{rmk}

\subsection{Central limit theorems}

The first result in this manuscript is a generalization of the central limit theorems \cite[Theorem 1.1]{KPT2019} and \cite[Proposition 2.4]{Schmu2001} to the broader class of distributions presented above. While the result can in principle be proved in a multivariate form, we prefer to stay in the one-dimensional setting for clarity and for ease of comparison with the moderate and large deviations principles discussed in the next subsections. The theorem below describes the Gaussian fluctuations of the $q$-norm of vectors chosen at random from the balls $\B_p^n$ according to the measures $\bP_{\bW_n,n,p}$. In this paper, we denote by $\overset{\Pro}{\longrightarrow}$ and $\overset{d}{\longrightarrow}$ convergence in probability and in distribution, respectively. Moreover, we put
\begin{equation}\label{eq:Mpq}
M_p(q)
:=
\frac{p^{q/p}}{q+1}\frac{\Gamma(1+\frac{q+1}{p})}{\Gamma(1+{1\over p})}
=
p^{q/p} \frac{\Gamma(\frac{q+1}{p})}{\Gamma({1\over p})}.
\end{equation}
for any $q>0$.

\begin{thmalpha}[Central limit theorem]\label{thm:CLT}
Fix $0<p<\infty$ and $0<q<\infty$.
Let $(\bW_n)_{n\in\N}$ be a sequence of Borel probability measures on $[0,\infty)$. For each $n\in\N$ let $Z_n\in\B_p^n$ be distributed according to $\bP_{\bW_n,n,p}$ and $W_n$ according to $\bW_n$. Assume that
\begin{align}\label{eq:AssumptionsWn}
{W_n\over\sqrt{n}}\,\,\toprob\,\, 0\,.
\end{align}
Then
\begin{align*}
\sqrt{n}\bigg({n^{{1\over p}-{1\over q}}\over M_p(q)^{1/q}}\|Z_n\|_q-1\bigg)\,\,\todistr\,\, N,
\end{align*}
where $N\sim\mathcal{N}(0,\sigma^2)$ is a centered Gaussian random variable with variance
\begin{align*}
\sigma^2 = {1\over q^2}\bigg({\Gamma({1\over p})\Gamma({2q+1\over p})\over\Gamma({q+1\over p})^2}-1\bigg)-{1\over p}.
\end{align*}
\end{thmalpha}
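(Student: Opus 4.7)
My plan is to reduce the problem to a classical central limit theorem for sums of i.i.d.\ random variables via the probabilistic representation of $\bP_{\bW_n,n,p}$ due to Barthe, Gu\'edon, Mendelson and Naor. If $Y_1,\dots,Y_n$ are i.i.d.\ with the $p$-generalised Gaussian density proportional to $e^{-|y|^p/p}$ and $W\sim\bW_n$ is independent of them, then
$$Z_n\stackrel{d}{=}\frac{(Y_1,\dots,Y_n)}{(\|Y\|_p^p+W)^{1/p}}.$$
Taking the $q$-norm and using $\E|Y_1|^r=M_p(r)$, in particular the special value $\E|Y_1|^p=M_p(p)=1$, yields the key identity
$$\frac{n^{1/p-1/q}}{M_p(q)^{1/q}}\,\|Z_n\|_q\stackrel{d}{=}\frac{\bigl(n^{-1}\|Y\|_q^q/M_p(q)\bigr)^{1/q}}{\bigl(n^{-1}\|Y\|_p^p+n^{-1}W\bigr)^{1/p}}.$$

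Next I introduce the centred i.i.d.\ variables $\xi_i:=|Y_i|^q/M_p(q)-1$ and $\eta_i:=|Y_i|^p-1$, so that the right-hand side above equals $f\bigl(n^{-1}\sum_i\xi_i,\;n^{-1}\sum_i\eta_i+n^{-1}W\bigr)$ where $f(u,v):=(1+u)^{1/q}(1+v)^{-1/p}$ satisfies $f(0,0)=1$, $\partial_uf(0,0)=1/q$, $\partial_vf(0,0)=-1/p$. Applying the bivariate CLT to the vectors $(\xi_i,\eta_i)$ and then the delta method, I obtain
$$\sqrt{n}\left(\frac{n^{1/p-1/q}}{M_p(q)^{1/q}}\|Z_n\|_q-1\right)\,\,\todistr\,\,\frac{G_1}{q}-\frac{G_2}{p},$$
where $(G_1,G_2)$ is centred Gaussian with covariance $\Sigma_{11}=\Var\xi_1$, $\Sigma_{22}=\Var\eta_1$, $\Sigma_{12}=\Cov(\xi_1,\eta_1)$. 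The assumption $W_n/\sqrt{n}\toprob 0$ enters precisely here: it guarantees that the $W/n$ perturbation of the second argument of $f$ is $o_{\Pro}(n^{-1/2})$ and hence negligible at the CLT scale.

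The final step is the explicit computation of the entries of $\Sigma$. Using $M_p(p+q)=(q+1)M_p(q)$, $M_p(2p)=p+1$, and the definition of $M_p$, one gets by direct calculation $\Var\eta_1=p$, $\Cov(\xi_1,\eta_1)=M_p(p+q)/M_p(q)-1=q$ and $\Var\xi_1=\Gamma(1/p)\Gamma((2q+1)/p)/\Gamma((q+1)/p)^2-1$. Inserting these into
$$\sigma^2=\frac{\Sigma_{11}}{q^2}-\frac{2\,\Sigma_{12}}{pq}+\frac{\Sigma_{22}}{p^2}$$
produces exactly the announced variance, since $p/p^2-2q/(pq)=-1/p$.

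The only point requiring care is the rigorous application of the delta method, as $f$ is not globally smooth. However its arguments converge to $(0,0)$ in probability, so with overwhelming probability one lies in a neighbourhood on which $f$ is $C^2$; a routine truncation argument then makes the Taylor expansion rigorous. I do not anticipate any other real obstacle: the whole proof is essentially a careful two-dimensional CLT plus delta method, with the novelty lying in how the distributional assumption on $\bW_n$ is absorbed into the second argument of $f$.
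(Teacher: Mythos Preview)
Your proposal is correct and follows essentially the same approach as the paper. The paper's proof is precisely a delta-method argument in disguise: it proves a probabilistic representation (Lemma~\ref{lem:ProbRep}) which is exactly the first-order Taylor expansion of your function $f$ at $(0,0)$ (written there as a function of three variables, keeping $W_n/n$ separate from $n^{-1}\sum_i\eta_i$), then splits $V_n$ into a linear part $T_n$ and a remainder $R_n$, applies the bivariate CLT to $(S_n^{(1)},S_n^{(2)})$ together with Slutsky for the $W_n/\sqrt{n}$ term, and shows $R_n\toprob 0$ by the same ``arguments land in a neighbourhood of the origin with high probability'' reasoning you describe. Your moment identities $M_p(2p)=p+1$ and $M_p(p+q)=(q+1)M_p(q)$ and the resulting variance computation are also exactly what the paper does, only there the simplification is left implicit.
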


Let us return to the situations (i)--(iii) described above and discuss some special cases of Theorem \ref{thm:CLT}. If for each $n$, $\bW_n=\bW$ for some fixed Borel probability measure $\bW$ on $[0,\infty)$, then assumption \eqref{eq:AssumptionsWn} is clearly satisfied. In particular, taking $\bW$ to be the Dirac measure at zero (recall (ii) above) or the exponential distribution with rate $1/p$ (recall (i) above), we recover the central limit theorem of Schmuckenschl\"ager \cite{Schmu2001}, see also Kabluchko, Prochno, and Th\"ale \cite{KPT2019}. As another example, we fix a sequence of positive real numbers $(a_n)_{n\in\N}$ such that $a_n/\sqrt{n}\to 0$, as $n\to\infty$, and let for each $n\in\N$, $\bW_n=\Gamma(a_n,b)$ be the gamma distribution with shape parameter $a_n$ and some fixed rate $b\in(0,\infty)$. Markov's inequality implies that \eqref{eq:AssumptionsWn} is satisfied in this case, from which the central limit theorem follows. In particular, taking $b=1/p$ we cover the situation discussed under (iii) above.

\begin{rmk}
\rm
In the case when $p=q$, the asymptotic variance $\sigma^2$ vanishes. In this case, Theorem \ref{thm:CLT} just states the distributional convergence of $\sqrt n (\|Z_n\|_p - 1)$ to $0$.
\end{rmk}

\begin{rmk}
\rm Theorem \ref{thm:CLT} should be compared with the (multivariate) central limit theorem for $\|Z_n\|_q$ proved in \cite{ProchnoThaeleTurchiChapter}. The latter is valid under the condition that $\sqrt{n}(1-\|Z_n\|_p)\toprob 0$, as $n\to\infty$. One can in fact show (with some efforts, see the previous remark) that our condition \eqref{eq:AssumptionsWn} implies the one in \cite{ProchnoThaeleTurchiChapter}. However, we prefer to give an alternative and separate argument, since it can be developed further to give a proof of our MDP.
\end{rmk}

In one of our applications we present in Section \ref{sec:Appl} below, a slight generalization of Theorem \ref{thm:CLT} is needed, where we allow  the random variables $W_n$ to converge to a non-trivial limiting distribution after a suitable centering and rescaling by $\sqrt n$.

\begin{thmalpha}[Generalized central limit theorem]\label{thm:CLT general Version2}
Fix $0<p<\infty$ and $0<q<\infty$. Let $(\bW_n)_{n\in\N}$ be a sequence of Borel probability measures on $[0,\infty)$. For each $n\in\N$ let $Z_n\in\B_p^n$ be distributed according to $\bP_{\bW_n,n,p}$ and $W_n$ according to $\bW_n$. Assume that
\begin{align}\label{eq:AssumptionsWnVersion2}
W_n^* := \frac{W_n - \mu_n}{\sqrt{n}} \,\,\todistr\,\, \zeta
\end{align}
with $\zeta\sim\mathcal N(0,\tau^2)$ for some $\tau^2\geq 0$, where $(\mu_n)_{n\in\N}$ is a sequence of non-negative real numbers satisfying $\mu_n /n \to  \mu \in[0,\infty)$, as $n\to\infty$. Then
\begin{align*}
\sqrt{n}\Bigg({n^{{1\over p}-{1\over q}} \frac{(1+\frac{\mu_n}{n})^{1/p}}{ M_p(q)^{1/q}}}\|Z_n\|_q-1\Bigg)\,\,\todistr\,\, \tilde N\,,
\end{align*}
where $\tilde N\sim\mathcal{N}(0,\tilde\sigma^2)$ is a centered Gaussian random variable with variance
$$
\tilde\sigma^2 = {\Gamma({1\over p})\Gamma({2q+1\over p})\over q^2\Gamma({q+1\over p})^2}-{1\over q^2} + \frac{1}{p(1+\mu)^2} - \frac{2}{p(1+\mu)} +\frac{\tau^2}{p^2(1+\mu)^2}.
$$
\end{thmalpha}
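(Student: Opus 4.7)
The plan is to use the probabilistic representation of Barthe, Gu\'edon, Mendelson, and Naor: if $\xi_1,\dots,\xi_n$ are i.i.d.\ $p$-generalized Gaussians with density $(2p^{1/p}\Gamma(1+\tfrac1p))^{-1}\exp(-|t|^p/p)$ and $W_n\sim\bW_n$ is independent of them, then $Z_n\stackrel{d}{=}R_n^{-1/p}(\xi_1,\dots,\xi_n)$ with $R_n:=S_n^{(p)}+W_n$ and $S_n^{(r)}:=\sum_{i=1}^n|\xi_i|^r$. Consequently $\|Z_n\|_q=(S_n^{(q)})^{1/q}/R_n^{1/p}$, which rearranges algebraically into
$$
T_n:=\frac{n^{{1\over p}-{1\over q}}(1+\mu_n/n)^{1/p}}{M_p(q)^{1/q}}\,\|Z_n\|_q=A_n^{1/q}B_n^{-1/p},\qquad A_n:=\frac{S_n^{(q)}}{nM_p(q)},\ \ B_n:=\frac{R_n}{n+\mu_n};
$$
the goal thus becomes to prove $\sqrt n(T_n-1)\todistr\tilde N$.

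Since $\E|\xi_1|^q=M_p(q)$, $\E|\xi_1|^p=M_p(p)=1$, and the Gamma identity $M_p(p+q)/M_p(q)=q+1$ holds, the bivariate CLT applied to the i.i.d.\ vectors $\bigl(|\xi_i|^q/M_p(q),|\xi_i|^p\bigr)$ yields
$$
\bigl(\sqrt n(A_n-1),\,\sqrt n(S_n^{(p)}/n-1)\bigr)\,\,\todistr\,\,(G_A,G_P),
$$
with $(G_A,G_P)$ a centered Gaussian pair satisfying $\Var(G_A)=v_q^2$, $\Var(G_P)=p$, $\Cov(G_A,G_P)=q$, where $v_q^2:=\Gamma(1/p)\Gamma((2q+1)/p)/\Gamma((q+1)/p)^2-1$. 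Because $W_n$ is independent of the $\xi_i$ and $W_n^*\todistr\zeta$ by hypothesis, the triple converges jointly with $\zeta$ independent of $(G_A,G_P)$. Since $\mu_n/n\to\mu$, Slutsky's lemma then gives
$$
\sqrt n(B_n-1)=\frac{1}{1+\mu_n/n}\Bigl(\sqrt n(S_n^{(p)}/n-1)+W_n^*\Bigr)\,\,\todistr\,\,\frac{G_P+\zeta}{1+\mu}.
$$

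Finally, because $A_n\toprob 1$ and $B_n\toprob 1$ (the latter using the weak law of large numbers together with $W_n/n\toprob\mu$ under the hypothesis), the delta method applied to the smooth map $(a,b)\mapsto a^{1/q}b^{-1/p}$ at the point $(1,1)$, whose gradient is $(1/q,-1/p)$, produces
$$
\sqrt n(T_n-1)\,\,\todistr\,\,\frac{G_A}{q}-\frac{G_P+\zeta}{p(1+\mu)}.
$$
Computing the variance of the right-hand side using the covariances above and the independence of $\zeta$ from $(G_A,G_P)$, then substituting the formula for $v_q^2$, recovers exactly the four-term $\tilde\sigma^2$ in the statement. The main obstacle is the careful bookkeeping of covariances: verifying the Gamma identity $M_p(p+q)=(q+1)M_p(q)$ to obtain the cross-covariance $q$, and tracking two independent sources of fluctuation (the CLT for $(S_n^{(q)},S_n^{(p)})$ and the residual $W_n^*$) through the non-linear map. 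The argument parallels the proof of Theorem~\ref{thm:CLT}; the new features are that $W_n^*$ is no longer negligible at scale $\sqrt n$, contributing the additional summand $\tau^2/(p^2(1+\mu)^2)$, and that the shift $\mu$ modifies the denominators.
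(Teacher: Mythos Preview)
Your proof is correct and follows essentially the same approach as the paper: both use the Barthe--Gu\'edon--Mendelson--Naor representation, the bivariate CLT for $(S_n^{(q)},S_n^{(p)})$ combined with the independent limit for $W_n^*$, and a linearization of $(a,b)\mapsto a^{1/q}b^{-1/p}$ at $(1,1)$. The only difference is packaging: the paper performs the Taylor expansion explicitly and shows the remainder $R_n\toprob 0$ by hand, whereas you invoke the multivariate delta method directly; your computation of the covariances via the identity $M_p(p+q)=(q+1)M_p(q)$ is correct and reproduces the paper's $\tilde\sigma^2$.
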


We emphasize that Theorem \ref{thm:CLT general Version2} is indeed a generalization of Theorem \ref{thm:CLT}. Namely, if \eqref{eq:AssumptionsWnVersion2} is satisfied with $\mu_n=0$ for all $n\in\N$ and $\tau^2=0$ then $W_n/\sqrt{n}\todistr 0$ and hence $W_n/\sqrt{n}\toprob 0$, as $n\to\infty$, so that \eqref{eq:AssumptionsWn} is satisfied. Moreover, let us briefly mention that Theorem \ref{thm:CLT general Version2} allows us to consider, for example, a gamma distribution $\Gamma(a_n,b)$ for $\bW_n$ with constant rate $b\in(0,\infty)$ and shape parameter $a_n\in(0,\infty)$ satisfying $a_n/n\to a\in[0,\infty)$, as $n\to\infty$. We take advantage of this flexibility in Section \ref{sec:Appl} below.

\subsection{Moderate deviations principle}

We will next describe the moderate deviations. A moderate deviations principle (MDP) is formally nothing else than a large deviations principle (LDP) but with important differences in the behavior of the two principles. For instance, while LDPs provide estimates on the scale of a law of large numbers, MDPs describe the probabilities at scales between a law of large numbers and a distributional limit theorem (like a central limit theorem). Moreover, while the rate function in an LDP depends in a subtle way on the distribution of the underlying random variables, the rate function in an MDP in typical situations is non-parametric and given by the Gaussian one inherited from a central limit theorem.
Let us recall that a sequence $(X_n)_{n\in\N}$ of random vectors in $\R^d$ ($d\in\N$) satisfies an LDP with {\emph speed} $s_n$ and  {\emph `good rate function'} $\rate:\R^d\to[0,\infty]$ if
\begin{equation*}
\begin{split}
-\inf_{x\in A^\circ}\rate(x) &\leq\liminf_{n\to\infty}s_n^{-1}\log\Pro[X_n\in A]\leq\limsup_{n\to\infty}s_n^{-1}\log\Pro[X_n\in A]\leq-\inf_{x\in\overline{A}}\rate(x)
\end{split}
\end{equation*}
for all measurable $A\subseteq\R^d$ ($A^\circ$ being the interior and $\overline A$ the closure of $A$), where $\rate$ is lower semi-continuous and has compact level sets $\{x\in\R^d\,:\, \rate(x) \leq \alpha \}$, $\alpha\in\R$. We say in this paper that a  sequence $(X_n)_{n\in\N}$ satisfies an MDP if the speed sequence $(s_n)_{n\in\N}$ is given by $s_n=b_n\sqrt{n}$ with a positive sequence $(b_n)_{n\in\N}$ satisfying $b_n=\omega(1)$ and $b_n=o(\sqrt{n})$, where for two sequences $(x_n)_{n\in\N}$ and $(y_n)_{n\in\N}$ we use the Landau notation $x_n=o(y_n)$ if $\lim_{n\to\infty} \frac{x_n}{y_n}=0$ and $x_n=\omega(y_n)$ if $\lim_{n\to\infty}|\frac{x_n}{y_n}|=+\infty$.
In our case, the random variables $X_n$ are suitably scaled versions of the $q$-norm of random points in $\B_p^n$.

The following MDP complements both the central limit theorems (Theorem \ref{thm:CLT}, Theorem \ref{thm:CLT general Version2} and also \cite[Theorem 1.1]{KPT2019}) as well as the large deviations result proved in \cite[Theorem 1.2]{KPT2019} and Theorem \ref{thm:LDP} below.

\begin{thmalpha}[Moderate deviations principle]\label{thm:MDP}
Fix $0<p< \infty$ and $0<q<\infty$ with $q<p$. Let $(\bW_n)_{n\in\N}$ be a sequence of Borel probability measures on $[0,\infty)$ and $(b_n)_{n\in\N}$ be a sequence of positive real numbers satisfying $b_n=\omega(1)$ and $b_n=o(\sqrt{n})$.
For each $n\in\N$ let $Z_n\in\B_p^n$ be distributed according to $\bP_{\bW_n,n,p}$. Assume that, for all $\delta>0$,
\begin{align}\label{eq:AssumptionMDP}
\limsup_{n\to\infty}{1\over b_n^2}\log\bW_n\big((\delta b_n\sqrt{n},\infty)\big) =-\infty.
\end{align}
Then the sequence of random variables
\begin{align*}
{\sqrt{n}\over b_n}\bigg({n^{{1/p}-{1/q}}\over M_p(q)^{1/q}}\|Z_n\|_q-1\bigg)
\end{align*}
satisfies an MDP with speed $b_n^2$ and good rate function $\rate(t)={t^2\over 2\sigma^2}$, $t\in\R$, where $\sigma^2$ is the variance from Theorem \ref{thm:CLT}.
\end{thmalpha}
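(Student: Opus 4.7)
The plan is to reduce the MDP to one for a pair of sums of i.i.d.\ random variables via the Barthe--Gu\'edon--Mendelson--Naor probabilistic representation and then linearize. Concretely, there exist i.i.d.\ random variables $Y_1, \ldots, Y_n$ with density proportional to $e^{-|y|^p/p}$, independent of $W_n\sim\bW_n$, such that
$$
Z_n \stackrel{d}{=} \frac{(Y_1, \ldots, Y_n)}{\bigl(\sum_{i=1}^n |Y_i|^p + W_n\bigr)^{1/p}}.
$$
Writing $S_n := \tfrac{1}{n}\sum_{i=1}^n|Y_i|^p$ and $T_n := \tfrac{1}{n}\sum_{i=1}^n|Y_i|^q$, one readily checks that
$$
\frac{n^{1/p-1/q}}{M_p(q)^{1/q}}\,\|Z_n\|_q \,=\, \Phi\bigl(S_n + \tfrac{W_n}{n},\, T_n\bigr),\qquad \Phi(s,t):=\frac{t^{1/q}}{M_p(q)^{1/q}\,s^{1/p}},
$$
and $\Phi(1, M_p(q)) = 1$ because $\E|Y_1|^p=1$ and $\E|Y_1|^q=M_p(q)$.

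The first main step is a bivariate MDP for $(S_n, T_n)$. The assumption $q<p$ guarantees that the joint moment generating function of $(|Y_1|^p, |Y_1|^q)$ is finite in an open neighborhood of the origin, so the classical multivariate moderate deviations principle for sums of i.i.d.\ random vectors applies. It yields that $\frac{\sqrt n}{b_n}\bigl((S_n, T_n) - (1, M_p(q))\bigr)$ satisfies an MDP on $\R^2$ at speed $b_n^2$ with quadratic good rate function $\tfrac12\langle x, \Sigma^{-1}x\rangle$, where $\Sigma$ is the covariance matrix of $(|Y_1|^p, |Y_1|^q)$.

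The second step removes the $W_n/n$-perturbation and the nonlinearity of $\Phi$ via exponential equivalence. Hypothesis \eqref{eq:AssumptionMDP} is precisely the statement that $\frac{W_n}{b_n\sqrt n}$ is superexponentially small at speed $b_n^2$. A Taylor expansion of $\Phi$ around $(1, M_p(q))$ gives
$$
\Phi(s,t) - 1 \,=\, -\tfrac{1}{p}(s-1) + \tfrac{1}{q M_p(q)}(t - M_p(q)) + R(s,t),
$$
with $|R(s,t)|\le C\bigl((s-1)^2 + (t-M_p(q))^2\bigr)$ on a compact neighborhood of $(1, M_p(q))$. Since the bivariate MDP of the previous step ensures that $(S_n + W_n/n, T_n)$ remains in any such neighborhood outside a superexponentially small event, the rescaled remainder $\tfrac{\sqrt n}{b_n}R(S_n+W_n/n, T_n)$ is of order $b_n/\sqrt n$ and vanishes superexponentially at speed $b_n^2$. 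The linear $W_n/n$-term gives $-W_n/(p\,b_n\sqrt n)$, which is also superexponentially small by hypothesis. Consequently, the target sequence is exponentially equivalent at speed $b_n^2$ to
$$
L_n \,:=\, \frac{\sqrt n}{b_n}\left[-\frac{1}{p}(S_n-1) + \frac{1}{q M_p(q)}(T_n - M_p(q))\right].
$$

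The proof is concluded by applying the contraction principle to the bivariate MDP of the first step via the linear functional $(u,v)\mapsto -\frac{u}{p} + \frac{v}{q M_p(q)}$. This produces an MDP for $L_n$ at speed $b_n^2$ with rate $\rate(t)=t^2/(2\sigma^2)$, where
$$
\sigma^2 \,=\, \frac{1}{p^2}\,\Var(|Y_1|^p) - \frac{2}{p\,q\,M_p(q)}\,\Cov(|Y_1|^p, |Y_1|^q) + \frac{1}{q^2 M_p(q)^2}\,\Var(|Y_1|^q).
$$
A direct evaluation using $\Var(|Y_1|^p)=p$, $\Cov(|Y_1|^p,|Y_1|^q) = q\,M_p(q)$, and $\E|Y_1|^{2q} = p^{2q/p}\Gamma((2q+1)/p)/\Gamma(1/p)$ identifies this expression with the variance $\sigma^2$ appearing in Theorem~\ref{thm:CLT}. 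The principal technical obstacle is the exponential equivalence in the second step: one must combine a careful compact-set truncation coming from the bivariate MDP with the superexponential tail hypothesis on $W_n$ to ensure that both the $W_n/n$-contribution and the quadratic Taylor remainder decay at the correct rate $b_n^2$.
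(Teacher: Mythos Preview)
Your proposal is correct and follows essentially the same route as the paper. Both arguments use the Barthe--Gu\'edon--Mendelson--Naor representation, establish a bivariate MDP for the centered sums $\bigl(\tfrac1n\sum|Y_i|^p,\tfrac1n\sum|Y_i|^q\bigr)$ via the classical MDP for i.i.d.\ sums (this is where $q<p$ enters, to secure exponential moments), linearize via a Taylor expansion, and close with an exponential-equivalence argument that disposes of both the $W_n$-contribution and the quadratic remainder; the only cosmetic difference is that the paper packages the Taylor expansion into a separate representation lemma in three variables (keeping $W_n/n$ as its own argument) and uses Cram\'er's theorem directly for the $e^{-cn}$ decay of $\Pro[|S_n-1|>\delta]$, whereas you absorb $W_n/n$ into the first slot and appeal to the bivariate MDP for the same estimate.
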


In particular, Theorem \ref{thm:MDP} implies that, for all $t\in\R$,
$$
\lim_{n\to\infty}{1\over b_n^2}\log\Pro\Bigg[{\sqrt{n}\over b_n}\bigg({n^{{1/ p}-{1/q}}\over M_p(q)^{1/q}}\|Z_n\|_q-1\bigg)\geq t\Bigg] = -{t^2\over 2\sigma^2}.
$$

Let us briefly return to the special cases (i)--(iii). Clearly, if $\bW_n$ is the Dirac measure at zero, Assumption \eqref{eq:AssumptionMDP} is satisfied. This covers case (ii) from above. On the other hand, if for each $n\in\N$, $\bW_n=\Gamma(a_n,b)$ is a gamma distribution with shape parameter $a_n\in(0,\infty)$ and rate $b>0$, we can use the MDP for sums of independent random variables (see Lemma \ref{lem:MDPVector} below) to conclude that Assumption \eqref{eq:AssumptionMDP} is satisfied if $a_n=\omega(\sqrt{n}b_n)$. Especially, taking $b=1/p$, this covers cases (i) and (iii).

\begin{rmk}
\rm If $p=q$, then the core term for the MDP that we study in Lemma \ref{lem:MDPcoreterms} below vanishes and therefore, we do not obtain an MDP with a non-trivial rate function.
\end{rmk}

\subsection{Large deviations principle}

The third type of limit theorem we obtain is a large deviations principle. As we shall see in a moment, contrary to the quadratic and non-parametric rate function in the MDP, the LDP is more sensitive to the underlying distribution and displays a significant difference in behavior depending on the parameter $p$ and its relative position with respect to the parameter $q$.

\begin{thmalpha}[Large deviations principle]\label{thm:LDP}
Fix $0<p< \infty$ and $0<q<\infty$ with $p\neq q$. Let $(\bW_n)_{n\in\N}$ be a sequence of Borel probability measures on $[0,\infty)$ and for each $n\in\N$ let $W_n$ be distributed according to $\bW_n$. For each $n\in\N$ let $Z_n\in\B_p^n$ be distributed according to $\bP_{\bW_n,n,p}$.
Then the sequence of random variables $n^{{1/p}-{1/q}}\|Z_n\|_q$ satisfies the following LDPs:
\begin{enumerate}
\item If $q<p$ we assume that the sequence $(W_n/n)_{n\in\N}$ satisfies an LDP with speed $n$ and good rate function $\rate_{\bW}$. Then the LDP is with speed $n$ and good rate function $\rate_{{\bf Z},1}=(\rate_1+\rate_{\bW})\circ F^{-1}$, where $F(x,y,z)=x^{1/q}(y+z)^{-1/p}$ and $\rate_1=\Lambda^*$ is the Legendre-Fenchel transform of the function
\[
\Lambda(t_1,t_2)=\log\int_0^\infty e^{t_1 x^q+(t_2-1/p)x^p}{\dint x\over p^{1/p}\Gamma(1+1/p)}.
\]
\item If $q>p$ we assume that sequence $(W_n/n)_{n\in\N}$ is exponentially equivalent to $0$ in the sense that
\begin{align}\label{eq:AssumptionLDP}
\limsup_{n\to\infty}{1\over n^{p/q}}\log \Pro\bigg[{W_n\over n}>\delta\bigg]=-\infty
\end{align}
for all $\delta>0$. Then the LDP is with speed $n^{p/q}$ and good rate function
\[
\rate_{{\bf Z},2}(x) = \begin{cases}
\frac{1}{p}\big(x^q-M_p(q)\big)^{p/q} & : x\geq M_p(q)^{1/q} \\
+\infty & :\text{otherwise}.
\end{cases}
\]
\end{enumerate}
\end{thmalpha}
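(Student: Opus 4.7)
The starting point is the probabilistic representation of $\bP_{\bW_n,n,p}$ due to Barthe, Gu\'edon, Mendelson, and Naor: if $g_1,\ldots,g_n$ are i.i.d.\ real random variables with the $p$-generalized Gaussian density $\frac{1}{2p^{1/p}\Gamma(1+1/p)}\,e^{-|x|^p/p}$ and $W_n\sim\bW_n$ is independent of them, then
\[
Z_n \stackrel{d}{=} \frac{(g_1,\ldots,g_n)}{\bigl(\sum_{i=1}^n |g_i|^p + W_n\bigr)^{1/p}},
\qquad
n^{1/p-1/q}\|Z_n\|_q \stackrel{d}{=} F\bigl(S_n^{(q)},\,S_n^{(p)},\,W_n/n\bigr),
\]
where $S_n^{(r)}:=\tfrac{1}{n}\sum_{i=1}^n|g_i|^r$ and $F(x,y,z)=x^{1/q}(y+z)^{-1/p}$. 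In both cases the strategy is to establish a joint LDP for the triple $(S_n^{(q)},S_n^{(p)},W_n/n)$ and push it through $F$ by the contraction principle.

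\textbf{Case $q<p$.} A direct computation identifies the cumulant generating function of the vector $(|g_1|^q,|g_1|^p)$ with the function $\Lambda$ appearing in the theorem statement; the hypothesis $q<p$ guarantees that $(t_2-1/p)x^p$ dominates $t_1 x^q$ at infinity whenever $(t_1,t_2)$ lies in a small neighbourhood of the origin, so $\Lambda$ is finite there and Cram\'er's multivariate theorem yields an LDP for $(S_n^{(q)},S_n^{(p)})$ at speed $n$ with good rate function $\rate_1=\Lambda^*$. Independence of $(g_i)_{i\geq 1}$ from $W_n$ combines this LDP with the hypothesized LDP for $W_n/n$ to give a joint LDP at speed $n$ with rate $\rate_1(x,y)+\rate_\bW(z)$, and continuity of $F$ on the effective domain delivers $\rate_{{\bf Z},1}=(\rate_1+\rate_\bW)\circ F^{-1}$ via the contraction principle.

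\textbf{Case $q>p$.} Here $|g_1|^q$ has stretched-exponential tails of order $p/q<1$, so Cram\'er at speed $n$ fails on the numerator. Instead one invokes the Weibull-type LDP for sums of i.i.d.\ subexponential random variables: $S_n^{(q)}$ satisfies an LDP at the sub-linear speed $n^{p/q}$ with good rate function
\[
\rate^{(q)}(x)=\tfrac{1}{p}\bigl(x-M_p(q)\bigr)^{p/q}\mathbf{1}_{\{x\geq M_p(q)\}},
\]
the deviation being realized by a single large summand, with the centering $M_p(q)=\E[|g_1|^q]$ and the prefactor $1/p$ read off directly from the $p$-Gaussian density. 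At the coarser speed $n^{p/q}$, both $S_n^{(p)}\to 1$ (by Cram\'er for $|g_1|^p$ applied at the faster speed $n$) and $W_n/n\to 0$ (by hypothesis \eqref{eq:AssumptionLDP}) hold exponentially, so the denominator $(S_n^{(p)}+W_n/n)^{1/p}$ is exponentially equivalent to $1$. A final contraction through $y\mapsto y^{1/q}$ on the numerator then produces $\rate_{{\bf Z},2}$.

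\textbf{Main obstacle.} The genuinely delicate step is the subexponential LDP in Case~(2): one must pin down the exact speed $n^{p/q}$ and the constant $1/p$ via a careful big-jump analysis of the tail of the $p$-generalized Gaussian density (not just its qualitative shape), and then verify exponential equivalence of the denominator at this sub-linear speed by combining Cram\'er for $S_n^{(p)}$ with hypothesis \eqref{eq:AssumptionLDP}. Case~(1) is by contrast a clean Cram\'er-plus-contraction exercise, modulo checking that $F$ is continuous on the effective domain of $\rate_1+\rate_\bW$ and that the infimum in the contraction is attained at the right preimage.
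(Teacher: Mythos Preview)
Your proposal is correct and follows essentially the same route as the paper: Case~(1) is Cram\'er for $(S_n^{(q)},S_n^{(p)})$ plus independence plus contraction through $F$, and Case~(2) is the subexponential LDP for $S_n^{(q)}$ (which the paper imports from \cite{KPT2019} rather than re-derives) combined with exponential equivalence of the denominator to $1$ at speed $n^{p/q}$. One small slip: your rate function $\rate^{(q)}(x)=\tfrac{1}{p}(x-M_p(q))^{p/q}\mathbf{1}_{\{x\geq M_p(q)\}}$ should be $+\infty$ (not $0$) for $x<M_p(q)$, since lower deviations of $S_n^{(q)}$ decay like $e^{-cn}$ and hence superexponentially at speed $n^{p/q}$; otherwise the contraction would not reproduce $\rate_{{\bf Z},2}$.
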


We emphasize that while the rate function $\rate_{{\bf Z},2}$ for $q>p$ is universal in the sense that it does not depend on $\rate_\bW$ (provided that $\rate_\bW$ does not vanish in a neighborhood of $1$), this is not the case for the rate function $\rate_{{\bf Z},1}$ for $q<p$, which in a subtle way depends on $\rate_\bW$. As examples we consider the special cases (i) and (ii) above. If for each $n\in\N$, $\bW_n$ is the Dirac measure at zero, the function $\rate_{\bW}$ is given by
$$
\rate_\bW(x) = \begin{cases}
0 &: x= 0\\
+\infty &:x\neq 0.
\end{cases}
$$
Moreover, if $\bW_n$ is the exponential distribution with parameter $1/p$ for each $n\in\N$, then
$$
\rate_{\bW}(x) = \begin{cases}
+\infty &: x < 0\\
{x\over p} &: x\geq 0.
\end{cases}
$$

\begin{rmk}\rm
If $p=q$, then the LDP of Theorem \ref{thm:LDP} (1) remains valid in a modified form. In fact, it still holds with speed $n$, but the rate function is then given by $(\widetilde{\Lambda}^*+\rate_\bW)\circ\widetilde{F}^{-1}$, where $\widetilde{\Lambda}^*$ is the Legendre-Fenchel transform of
\[
\widetilde{\Lambda}(t) = \log \int_0^\infty e^{(t-1/p)x^{p}}\frac{\dint x}{p^{1/p}\Gamma(1+1/p)}
\]
and $\widetilde{F}$ is the function $\widetilde{F}:(t_1,t_2)\mapsto t_1^{1/p}/(t_1+t_2)^{1/p}$.
\end{rmk}

\subsection{Structure} The remaining parts of this text are structured as follows. Two applications of our results to random and non-random projections of $\ell_p^n$-balls are discussed in Section \ref{sec:Appl}. In Section \ref{sec:Prelim} we rephrase some preliminary results, which are used in proofs of Theorems \ref{thm:CLT}, \ref{thm:CLT general Version2}, \ref{thm:MDP}, and \ref{thm:LDP}. The latter are contained in Section \ref{sec:Proofs}. More precisely, we develop a crucial probabilistic representation for the involved random variables in Section \ref{subsec:ProbRep} and then prove Theorem \ref{thm:CLT} in Section \ref{subsec:ProofCLT}, Theorem \ref{thm:CLT general Version2} in Section \ref{subsec:ProofGeneralCLT}, Theorem \ref{thm:MDP} in Section \ref{subsec:ProofMDP}, and Theorem \ref{thm:LDP} in Section \ref{subsec:ProofLDP}.

\section{Application to projections of $\ell_p^n$-balls}\label{sec:Appl}

\subsection{Random versus non-random subspaces}

Projections of $\ell_p^n$-balls to lower-dimensional subspaces were subject of a number of studies, see, e.g., \cite{APT2018,APTCLT,GKR2017,KimRamanan,Meckes,Meckes2}. In these works two different set-ups were studied, one in which the subspace one projects onto is random, and another one, in which the choice of the subspace is deterministic (for an extensive comparison of both situations for one-dimensional projections we refer the reader to \cite{GKRCramer,GKR2017}). We shall use the limit theory for the general distributions $\bP_{\bW_n,n,p}$ on $\ell_p^n$-balls presented in the previous section to compare both approaches. We start by recalling the framework for projections onto random subspaces taken from \cite{APT2018,APTCLT}. We let $(k_n)_{n\in\N}$ be a sequence of integers satisfying $k_n\in\{1,\ldots,n\}$ and $k_n/n\to\lambda\in[0,1]$, as $n\to\infty$. Moreover, for each $n\in\N$, let $X_n$ be uniformly distributed on $\B_p^n$ and let $E_n$ be a uniformly distributed $k_n$-dimensional random subspace (where the uniform distribution refers to the Haar probability measure on the Grassmannian of all $k_n$-dimensional linear subspaces in $\R^n$). We assume that the two sequences $(X_n)_{n\in\N}$ and $(E_n)_{n\in\N}$ are independent. Moreover, we denote by $P_{E_n}X_n$ the orthogonal projection of $X_n$ onto $E_n$. The quantity studied in \cite{APT2018,APTCLT} is the Euclidean norm of the projection of the random vector $X_n$ onto the random subspace $E_n$, i.e., $\|P_{E_n}X_n\|_2$.

We first rephrase the central limit theorem \cite[Theorem 1.1]{APTCLT}. It says that if $k_n\to\infty$, as $n\to\infty$, then
\begin{align}\label{eq:CLTfromBernoulliPaper}
{n^{{1/p}}\over\sqrt{M_p(2)}}\|P_{E_n}X_n\|_2-\sqrt{k_n}\todistr N,
\end{align}
where $N$ is a centered Gaussian random variable with variance
$$
\sigma^2(p,\lambda)={\lambda\over 4}{\Gamma({1\over p})\Gamma({5\over p})\over\Gamma({3\over p})^2}-\lambda\Big({3\over 4}+{1\over p}\Big)+{1\over 2}.
$$
Observe that taking $\lambda=1$ the constant $\sigma^2(p,1)$ coincides with $\sigma^2$ from Theorem \ref{thm:CLT} if we take $q=2$ there.

Next, we recall the LDP for the same quantities from \cite[Theorem 1.2]{APT2018} (for simplicity we restrict ourselves to the case $p<2$, since only in this case an explicit form of the rate function is available). Using the same notation as before, it says that for any $p\in[1,2)$ the sequence of random variables $n^{{1\over p}-{1\over 2}}\|P_{E_n}X_n\|_2$ satisfies an LDP with speed $n^{p/2}$ and good rate function
\begin{align}\label{eq:RateFunctionFromAPT}
\rate(y) = \begin{cases}
{1\over p}\Big({y^2\over\lambda}-M_p(2)\Big)^{p/2} &: y\geq\sqrt{\lambda M_p(2)}\\
+\infty &: \text{otherwise},
\end{cases}
\end{align}
whenever $\lambda:=\lim\limits_{n\to\infty}{k_n\over n}\in(0,1]$.

The projections onto random subspaces as just described can be compared with projections onto sequences of deterministic subspaces. In fact, our distributional framework allows to deal with projections onto coordinate subspaces. Namely, let the sequence $(k_n)_{n\in\N}$ be as above and let, for each $n\in\N$, $X_n$ be uniformly distributed in the $n$-dimensional $\ell_p^n$-ball $\B_p^n$ with $0<p<\infty$. We denote by $\Pi_{k_n}X_n$ the orthogonal projection of $X_n$ onto the first $k_n$ coordinates. Thus, $\Pi_{k_n}$ is the projection from $\R^n$ to $\{x=(x_1,\ldots,x_n)\in\R^n:x_i=0\text{ for }i>k_n\}$, which in turn can be identified with $\R^{k_n}$.
\begin{thmalpha}[Central limit theorem for deterministic projections]\label{thm:CLT_det_proj}
Assume that $k_n/ n\to\lambda\in(0,1]$, as $n\to\infty$. Then,
$$
{n^{1/p}\over\sqrt{M_p(2)}}\|\Pi_{k_n}X_n\|_2-\sqrt{k_n} \todistr \tilde N,
$$
where $\tilde N$ is a centered Gaussian random variable with variance
$$
\tilde\sigma^2(p,\lambda) = \frac 14 \bigg ({\Gamma({1\over p})\Gamma({5\over p})\over \Gamma({3\over p})^2}-1\bigg) - \frac{\lambda}{p}.
$$
\end{thmalpha}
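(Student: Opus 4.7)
The plan is to identify the distribution of $\Pi_{k_n}X_n$ as a member of the class $\bP_{\bW_n,n,p}$ and then to apply the generalized central limit theorem (Theorem~\ref{thm:CLT general Version2}).

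By item (iii) in the introduction, if $X_n$ is uniform on $\B_p^n$, its projection $\Pi_{k_n}X_n$ onto the first $k_n$ coordinates is distributed according to the beta-type measure with parameter $\alpha_n=1+(n-k_n)/p$, that is, according to $\bP_{\bW_{k_n},k_n,p}$ with $\bW_{k_n}=\Gamma(\alpha_n,1/p)$. I would therefore apply Theorem~\ref{thm:CLT general Version2} in ambient dimension $k_n$ (playing the role of~$n$) and with $q=2$. The mean of $\bW_{k_n}$ is $\mu_{k_n}=p\alpha_n=p+n-k_n$ and its variance is $p^2\alpha_n$, so $\mu_{k_n}/k_n\to\mu:=(1-\lambda)/\lambda\in[0,\infty)$. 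Since the shape parameter $\alpha_n$ tends to infinity when $\lambda<1$, the classical gamma CLT gives
$$
\frac{W_{k_n}-\mu_{k_n}}{\sqrt{k_n}}\,\,\todistr\,\,\mathcal{N}(0,\tau^2),\qquad \tau^2=\frac{p(1-\lambda)}{\lambda},
$$
while when $\lambda=1$ the left-hand side tends to $0$ in probability, which is the admissible degenerate case $\tau^2=0$. Hence hypothesis~\eqref{eq:AssumptionsWnVersion2} holds.

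Invoking Theorem~\ref{thm:CLT general Version2} and using $1+\mu_{k_n}/k_n=(p+n)/k_n$, the prefactor collapses as $k_n^{1/p-1/2}((p+n)/k_n)^{1/p}=(p+n)^{1/p}/\sqrt{k_n}$, so after rearrangement I obtain
$$
\frac{(p+n)^{1/p}}{\sqrt{M_p(2)}}\|\Pi_{k_n}X_n\|_2-\sqrt{k_n}\,\,\todistr\,\,\tilde N,
$$
with $\tilde N\sim\mathcal N(0,\tilde\sigma^2)$. Since $(p+n)^{1/p}-n^{1/p}=O(n^{1/p-1})$ while $\|\Pi_{k_n}X_n\|_2$ concentrates at order $n^{1/2-1/p}$ (an immediate consequence of the above), the difference between the $(p+n)^{1/p}$- and the $n^{1/p}$-scalings is of order $O(n^{-1/2})$, and a Slutsky argument replaces $(p+n)^{1/p}$ by $n^{1/p}$ without altering the limit.

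It then remains to verify that the variance reduces to the one claimed. Substituting $1+\mu=1/\lambda$ and $\tau^2=p(1-\lambda)/\lambda$ into the formula of Theorem~\ref{thm:CLT general Version2} gives
$$
\tilde\sigma^2=\frac{1}{4}\bigg(\frac{\Gamma(1/p)\Gamma(5/p)}{\Gamma(3/p)^2}-1\bigg)+\frac{\lambda^2-2\lambda+\lambda(1-\lambda)}{p},
$$
and the rational part simplifies to $-\lambda/p$, matching the statement. The main obstacle is bookkeeping: making sure that the three rational contributions coming from the directional randomness on $\SSS_p^{k_n-1}$, the deterministic centering $\mu_{k_n}$, and the Gaussian fluctuation $\tau^2$ of the gamma weight collapse cleanly into the single correction $-\lambda/p$.
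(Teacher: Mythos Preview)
Your proposal is correct and follows essentially the same route as the paper: identify $\Pi_{k_n}X_n$ as having law $\bP_{\bW_{k_n},k_n,p}$ with $\bW_{k_n}=\Gamma(1+(n-k_n)/p,1/p)$, verify hypothesis~\eqref{eq:AssumptionsWnVersion2} via the gamma CLT, apply Theorem~\ref{thm:CLT general Version2} with $q=2$, compute the variance, and finally swap $(n+p)^{1/p}$ for $n^{1/p}$ via Slutsky. Your $\lambda$-dichotomy (gamma CLT when $\lambda<1$, Chebyshev when $\lambda=1$ since the variance of $W_{k_n}^*$ then tends to $0$) is in fact slightly cleaner than the paper's case split on whether $n-k_n$ is bounded plus a subsequence argument, but the substance is the same.
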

\begin{proof}
Recalling the special case (iii) for $\bP_{\bW_{k_n},k_n,p}$ from the previous section, we see that the projected random vector $\Pi_{k_n}X_n$ has distribution $\bP_{\bW_{k_n},k_n,p}$ on $\B_p^{k_n}$, where $\bW_{k_n}=\Gamma({n-k_n\over p}+1,{1\over p})$ is a gamma distribution with shape parameter ${n-k_n\over p}+1$ and rate $1/p$.
We are going to apply the central limit theorem to the gamma distribution with the aim of verifying condition~\eqref{eq:AssumptionsWnVersion2} of Theorem~\ref{thm:CLT general Version2}. Keeping in mind that $k_n$ is now the dimension parameter of the projection, we define
$$
\mu_{k_n}:=\E W_{k_n} = n-k_n+p
\qquad\text{and}\qquad
\mu:=\lim_{n\to\infty}{\mu_{k_n}\over k_n} = {1-\lambda\over\lambda}\geq 0\,.
$$
In addition, we have that
$$
{\Var\, W_{k_n}\over k_n} = p\Big({n\over k_n}-1\Big) + {p^2\over k_n} \to p\,{1-\lambda\over\lambda}=:\tau^2\geq 0\,,
$$
as $n\to\infty$. Assume, for a moment, that $n-k_n\to\infty$. Then, even though $\tau^2$ can vanish, we have $\Var\, W_{k_n} \to\infty$.  Under these circumstances, the central limit theorem is applicable to the gamma distribution and yields that
\begin{equation}\label{eq:W_k_n_*_CLT}
W_{k_n}^* := \frac{W_{k_n} - \mu_{k_n}}{\sqrt{k_n}} \todistr \zeta \sim \mathcal{N}(0,\tau^2)\,.
\end{equation}
On the other hand, if $n-k_n$ stays bounded, then $\mu_{k_n}$ stays bounded, hence the sequence $(W_{k_n})_{n\in\N}$ is tight, and since $k_n\to\infty$ (recall that $\lambda\neq 0$), we conclude that~\eqref{eq:W_k_n_*_CLT} still holds with $\tau^2=0$. Summarizing, we conclude that~\eqref{eq:W_k_n_*_CLT} always holds under the assumptions of the theorem. Indeed assume that~\eqref{eq:W_k_n_*_CLT} is violated. Since the sequence $(W_{k_n}^* )_{n\in\N}$ has uniformly bounded variances, we could pass to a subsequence for which $W_{k_n}^*$ converges weakly to some distribution different from $\mathcal N(0,\tau^2)$. Passing one more time to a subsequence, we could assume that either $n-k_n\to\infty$ or $n-k_n$ is bounded. However, as was explained above, this would lead to a contradiction.

We can thus apply Theorem \ref{thm:CLT general Version2} with $q=2$ and dimension parameter $k_n$ instead of $n$ to conclude that
$$
\sqrt{k_n}\Bigg(k_n^{{1\over p}-{1\over 2}}{\big(1+{\mu_{k_n}\over k_n}\big)^{1/p}\over\sqrt{M_p(2)}}\|\Pi_{k_n}X_n\|_2-1\Bigg)
\todistr \tilde N,
$$
where $\tilde N$ is a centered normal random variable with variance
\begin{align*}
\tilde \sigma^2(p,\lambda)
&=
\frac 14 \bigg ({\Gamma({1\over p})\Gamma({5\over p})\over \Gamma({3\over p})^2}-1\bigg) + \frac{1}{p(1+\mu)^2} - \frac{2}{p(1+\mu)} +\frac{\tau^2}{p^2(1+\mu)^2}\\
&=
\frac 14 \bigg ({\Gamma({1\over p})\Gamma({5\over p})\over \Gamma({3\over p})^2}-1\bigg) + \frac{\lambda^2}{p} - \frac{2\lambda}{p} + \frac{(1-\lambda)\lambda}{p}\\
&=\frac 14 \bigg ({\Gamma({1\over p})\Gamma({5\over p})\over \Gamma({3\over p})^2}-1\bigg)-{\lambda\over p}.
\end{align*}
After recalling that $\mu_{k_n}= n-k_n+p$, this can be written in the form
\begin{equation}\label{eq:CLT mit n+p}
 {(n+p)^{1/p}\over\sqrt{M_p(2)}}\|\Pi_{k_n}X_n\|_2-\sqrt{k_n} \todistr \tilde N.
\end{equation}
To complete the proof, we need to replace the factor $(n+p)^{1/p}$ by $n^{1/p}$. That this is always possible can be seen as follows. For $n\in\N$ we define
$$
a_n := {(n+p)^{1/p}\over\sqrt{M_p(2)}}\,,\qquad a_n':={n^{1/p}\over\sqrt{M_p(2)}}\,,\qquad b_n:=\sqrt{k_n}\qquad \text{and}\qquad \xi_n:=\|\Pi_{k_n}X_n\|_2.
$$
Then \eqref{eq:CLT mit n+p} reads as $a_n\xi_n-b_n\todistr \tilde N$, and our aim is to show that the same is true with $a_n$ replaced by $a_n'$. To this end we write
$$
a_n'\xi_n-b_n = (a_n\xi_n-b_n)\,{a_n'\over a_n} - b_n\Big(1-{a_n'\over a_n}\Big).
$$
Since $a_n'/a_n\to 1$, as $n\to\infty$, the first term converges in distribution to $\tilde N$ by Slutsky's theorem, and it remains to prove that $b_n\big(1-{a_n'\over a_n}\big)\to 0$. This is done as follows:
$$
b_n\bigg(1-{a_n'\over a_n}\bigg)
=
\sqrt{k_n}\bigg(1-{n^{1/p}\over(n+p)^{1/p}}\bigg)
=
\sqrt{k_n}\big(1-(1+p/n)^{-1/p}\big)
=
O\bigg(\frac{\sqrt{k_n}}n\bigg)
\to 0.
$$
Summarizing, we have shown that \eqref{eq:CLT mit n+p} is in fact equivalent to
$$
{n^{1/p}\over\sqrt{M_p(2)}}\|\Pi_{k_n}X_n\|_2-\sqrt{k_n} \todistr \tilde N,
$$
thus completing the proof.
\end{proof}

Theorem~\ref{thm:CLT_det_proj}, together with~\eqref{eq:CLTfromBernoulliPaper},  leads us to the remarkable observation that we have the same central limit behavior regardless of whether we project onto uniform random subspaces of dimensions $k_n$ or onto deterministic coordinate subspaces of the same dimension, provided their dimension is sufficiently large, i.e., if $k_n/n\to 1$ as $n\to\infty$. Indeed, the centering in both results is the same, and it is easy to check that $\sigma^2(p,1)= \tilde \sigma^2(p,1)$.  On the other hand, if $k_n/n \to \lambda \in (0,1)$, we still have a central limit theorem for the (suitably centered and rescaled) quantities $\|P_{E_n}X_n\|_2$ and $\|\Pi_{k_n}X_n\|_2$, with the same centering, but this time with different limiting variances $\sigma^2(p,\lambda)$ and $\tilde\sigma^2(p,\lambda)$, respectively.

\medbreak

A similar comparison as for the central limit theorem can be made on the large deviations scale. We restrict ourselves to the case $1\leq p<2$ and $k_n/n\to 1$, that is $\lambda=1$. We are interested in large deviations of $\|\Pi_{k_n} X_n\|_2$, which is distributed as the $2$-norm of a random vector with the probability law $\bP_{\bW_{k_n},k_n,p}$ on $\B_p^{k_n}$, where $\bW_{k_n}$ is the gamma distribution $\Gamma({n-k_n\over p}+1,{1\over p})$, as above. Let us check that the sequence of random variables $W_{k_n}/k_n$ with $W_{k_n}$ having distribution $\bW_{k_n}$ is exponentially equivalent to $0$ in the sense of \eqref{eq:AssumptionLDP}. Fix some $\delta>0$. Since $n-k_n=o(n)$, the convolution property of the gamma distribution in its shape parameter entails that, for large $n$, the random variable $W_{k_n}$ is stochastically dominated by a sum $S_{[\delta n/4]}$ of $[\delta n/4]$ i.i.d.\ $\Gamma(1,{1\over p})$-distributed random variables. Note that $\E S_{[\delta n/4]} = p[\delta n/4]$. Moreover, again for $n$ sufficiently large, $\frac{k_n}{n}>\frac{p}{2}$. We deduce from this and Cram\'er's theorem (see Lemma \ref{lem:cramer} below) that for large $n\in\N$
$$
\Pro\big[{W_{k_n}/ k_n}>\delta\big] = \Pro\big[W_{k_n}>n\delta k_n/n\big] \leq \Pro\big[{W_{k_n}}> p\delta n/2\big] \leq \Pro \big[S_{[\delta n/4]} > p\delta n/2 \big] \leq e^{-c n} \leq e^{-c k_n},
$$
where $c=c(\delta,p)\in(0,\infty)$ is some constant depending on $\delta$ and $p$, but since $p\in[1,2)$ the dependence on $p$ can be omitted. Note that the above argument would fail if $k_n/n\to \lambda<1$.  Thus,
$$
\limsup_{n\to\infty}{1\over k_n^{p/2}}\log \Pro\Big[{W_{k_n}\over k_n}>\delta\Big] \leq \limsup_{n\to\infty}-{c\,k_n\over k_n^{p/2}} = -\infty,
$$
since $p<2$. In this case,  Theorem \ref{thm:LDP} can be applied with $q=2$ and we obtain an LDP for $k_n^{1/p-1/2}\|\Pi_{k_n}X_n\|_2$ with speed $k_n^{p/2}$ and the rate function given in Theorem \ref{thm:LDP}. Since $k_n/n\to 1$, we conclude that $n^{1/p-1/2}\|\Pi_{k_n}X_n\|_2$ satisfies an LDP with speed $n^{p/2}$ and the same rate function $\rate$ as in~\eqref{eq:RateFunctionFromAPT} with $\lambda =1$ there.  Again, this shows that the same large deviations behavior is present regardless of whether we project onto uniform random subspaces of dimensions $k_n$ or onto deterministic coordinate subspaces of the same dimension, again provided their dimension is sufficiently large in the sense that $k_n/n\to 1$, as $n\to\infty$.

\subsection{$1$-dimensional random projections of $\ell_p^n$-balls}

In this section we present another application of our main results demonstrating the advantage of studying the more general distributions $\bP_{\bW,n,p}$  on the $\ell_p^n$-balls. In \cite[Corollary 2.6]{KPT2019}, we proved a generalization to $\ell_p^n$-balls of a central limit theorem obtained by Paouris, Pivovarov, and Zinn \cite[p. 703]{PPZ14} and Kabluchko, Litvak, and Zaporozhets \cite[Theorem 3.6]{KLZ15pomi} for the width of orthogonal projections of the $n$-dimensional cube $\B_\infty^n$ onto a uniformly distributed random direction. For $1 < q <\infty$ with $q\neq 2$ and a random vector chosen from $\SSS^{n-1}$ with respect to the cone probability measure (which in this case coincides with the normalized spherical Lebesgue measure), it was shown in \cite{KPT2019} that, as $n\to\infty$,
\[
\frac{n^{1/q} \vol_1(P_\theta\B_q^n)}{2M_2(q^*)^{1/q^*}} - \sqrt{n} \stackrel{d}{\longrightarrow} N
\]
where $N$ is a centered Gaussian random variable with variance
\begin{align}\label{eq: sigma_q^2}
\sigma^2(q) & = \frac{1}{(q^*)^2}\bigg(\frac{\sqrt{\pi}\, \Gamma(\frac{2q^*+1}{2})}{\Gamma(\frac{q^*+1}{2})^2}-1\bigg) - \frac{1}{2}.
\end{align}
Here, $q^*$ denotes the H\"older conjugate of $q$ satisfying $\frac{1}{q}+\frac{1}{q^*}=1$, $P_\theta$ denotes the orthogonal projection onto the line spanned by $\theta$, and
\begin{equation}\label{eq:representation width}
\vol_1(P_\theta\B_q^n) = 2 \sup_{x\in\B_q^n}|\langle x,\theta \rangle| = 2 \|\theta\|_{q^*}.
\end{equation}
While the argument to obtain this central limit theorem had to be extracted from the proof of the main result \cite[Theorem 1.1]{KPT2019}, it is in our set-up a direct consequence of Theorem \ref{thm:CLT}, since we study more general distributions for which the choice $\bW_n=\delta_0$ and $p=2$ yields that $\bP_{\bW_n,n,2}$ is just the cone probability measure on $\B_2^n$. More precisely, to obtain the central limit theorem above, we use the representation \eqref{eq:representation width} and apply Theorem \ref{thm:CLT} with the choice $\bW_n=\delta_0$, $p=2$, $q$ replaced by $q^*$, and take $Z_n=\theta$.

Beyond the Gaussian fluctuations just described, our results in Theorems \ref{thm:MDP} and \ref{thm:LDP} concerning moderate and large deviations allow us to deduce the complementing MDPs and LDPs for the length of the orthogonal projection of $\B_q^n$ onto a random direction as well. We start with the description of the moderate deviations behaviour. Using Theorem \ref{thm:MDP} with the choice $p=2$, $\bW_n=\delta_0$, and $q$ replaced by $q^*$ with $q^*<p$, we obtain that the sequence of random variables
\[
{n^{1/q}\over b_n}\frac{\vol_1(P_\theta\B_q^n)}{2M_2(q^*)^{1/q^*}} - {\sqrt{n}\over b_n}
\]
satisfies an MDP with speed $b_n^2$ and good rate function $\rate(t) = t^2/(2\sigma^2(q))$, where $(b_n)_{n\in\N}$ is a sequence of positive real numbers satisfying $b_n=\omega(1)$ and $b_n=o(\sqrt{n})$, and the constant $\sigma^2(q)$ is as in \eqref{eq: sigma_q^2}.

The large deviations are obtained similarly.  Using Theorem \ref{thm:MDP} with the choice $p=2$, $\bW_n=\delta_0$, and $q$ replaced by $q^*$ with $q^*>p$ (we restrict ourselves to this case, since only in this case we have a closed form expression for the rate function), we obtain that the sequence of random variables
\[
n^{{1\over q}-{1\over 2}}{\vol_1(P_\theta\B_q^n)\over 2}
\]
satisfies an LDP with speed $n^{2/q^*}=n^{2-2/q}$ and good rate function
\begin{align*}
\rate(x) &= \begin{cases}
\frac{1}{2}\big(x^{q^*}-M_2(q^*)\big)^{2/q^*} & : x\geq M_2(q^*)^{1/q^*} \\
+\infty & :\text{otherwise}
\end{cases}\\
&= \begin{cases}
{1\over 2}\big(x^{q/(q-1)}-M_2(q^*)\big)^{2-2/q} &: x\geq M_2(q^*)^{1-1/q} \\
+\infty & :\text{otherwise}.
\end{cases}
\end{align*}
Finally, we mention that the constant $M_2(q^*)^{1/q^*}$ can be explicitly expressed as $$\sqrt{2\pi^{1-q\over q}}\,\Gamma\Big({2q-1\over 2q-2}\Big)^{1-1/q}$$ in terms of the parameter $q$.

\section{Preliminaries}\label{sec:Prelim}

In this section we briefly present some background material used throughout the rest of this text. For convenience of the reader, we split this into different subsections that may be skipped depending on the reader's background.

\subsection{Generalized Gaussian random variables}

Let us denote, for $0<p<\infty$, by $(Y_i)_{i\in\N}$ a sequence of independent copies of a $p$-generalized Gaussian random variable with Lebesgue density
$$
f_p(x)=c_p^{-1}e^{-\frac{|x|^p}{p}}  ,\quad x\in\R,
$$
where the normalization constant $c_p$ is given by $c_p:=2p^{1/p}\Gamma(1+\frac{1}{p})$. Next, recall the definition of the constant $M_p(q)$ from \eqref{eq:Mpq}. It can be used to express first- and second-order moments of $p$-generalized Gaussian random variables as follows. Namely, for $q,r,s>0$ we have that
\begin{align}\label{eq:Moments}
\E|Y_1|^q = M_p(q)\qquad\text{and}\qquad\Cov(|Y_1|^r,|Y_1|^s)=M_p(r+s)-M_p(r)M_p(s),
\end{align}
see \cite[Lemma 3.1]{APTCLT}. Note that $M_p(p)=1$.

The family of $p$-generalized Gaussian random variables can be used to describe a probabilistic interpretation of the distributions  $\bP_{\bW_n,n,p}$ that were defined in the introduction. This interpretation is one of the key devices in the proofs of Theorems \ref{thm:CLT}, \ref{thm:MDP}, and \ref{thm:LDP}.

\begin{lemma}[Probabilistic interpretation, Theorem 3 in \cite{BartheGuedonEtAl}]\label{lem:prob representation}
Let $0< p< \infty$, $Y^{(n)}=(Y_1,\dots,Y_n)$ be a random vector of independent and $p$-generalized coordinates, and assume that $W_n$ is a non-negative random variable with distribution ${\bf W}_n$, which is independent of $Y^{(n)}$. Then the random vector
\[
\frac{Y^{(n)}}{(\|Y^{(n)}\|_p^p + W_n)^{1/p}}
\]
is distributed according to the measure $\bP_{\bW_n,n,p}$.
\end{lemma}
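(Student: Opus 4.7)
My strategy is to compute the law of $Z := Y^{(n)}/(\|Y^{(n)}\|_p^p + W_n)^{1/p}$ directly and match it with the prescribed mixture $\bP_{\bW_n,n,p} = \bW_n(\{0\})\,\bC_{n,p} + H\,\bU_{n,p}$. The first step is to invoke the standard polar decomposition of $p$-generalized Gaussian vectors (Schechtman--Zinn): writing $Y^{(n)} = R\,U$ with $R := \|Y^{(n)}\|_p$ and $U := Y^{(n)}/R$, the factors $R$ and $U$ are independent, $U$ has the cone distribution $\bC_{n,p}$ on $\SSS_p^{n-1}$, and $R^p \sim \Gamma(n/p, 1/p)$. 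Setting $S := R/(R^p+W_n)^{1/p} \in [0,1]$, one then has $Z = S\,U$, with $U$ independent of $(S,W_n)$.

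Second, I would split according to whether $W_n = 0$. On $\{W_n = 0\}$ (of probability $\bW_n(\{0\})$) one has $S = 1$, hence $Z = U \in \SSS_p^{n-1}$ is cone distributed, which exactly accounts for the atomic term $\bW_n(\{0\})\,\bC_{n,p}$. Conditionally on $\{W_n = w\}$ with $w > 0$, the change of variables $R^p \mapsto S$ (using $R^p = S^p w/(1-S^p)$) applied to the $\Gamma(n/p,1/p)$ density of $R^p$ yields a closed-form density $f_S$ on $(0,1)$. By independence of $S$ and $U$ together with the polar integration identity
\[
\int_{\B_p^n} f(x)\,\dint x = n\,\vol(\B_p^n)\int_0^1 r^{n-1}\int_{\SSS_p^{n-1}} f(r\theta)\,\bC_{n,p}(\dint\theta)\,\dint r,
\]
the Lebesgue density of $Z$ at $z \in \B_p^n$ is $f_S(\|z\|_p)/(n\,\vol(\B_p^n)\,\|z\|_p^{n-1})$, which after the identity $n\,\Gamma(n/p) = p\,\Gamma(1+n/p)$ reduces to
\[
f_{Z\mid W_n = w}(z) = \frac{w^{n/p}}{\vol(\B_p^n)\,p^{n/p}\,\Gamma(1+n/p)(1-\|z\|_p^p)^{1+n/p}}\,e^{-w\|z\|_p^p/(p(1-\|z\|_p^p))}.
\]

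The final step is to integrate this expression against $\bW_n$ on $w \in (0,\infty)$; the atom of $\bW_n$ at $0$ contributes nothing since $w^{n/p}$ vanishes there, so the integral may be extended to $[0,\infty)$, and the result coincides with $h(\|z\|_p)/\vol(\B_p^n)$ by inspection, which is exactly the Lebesgue density of $H\,\bU_{n,p}$. The main obstacle is the bookkeeping of normalizing constants through the polar decomposition and the Jacobian; no conceptual difficulty is expected. An alternative route that bypasses the polar decomposition is to perform the $n$-dimensional change of variables $y \mapsto z = y/(\|y\|_p^p + w)^{1/p}$ directly on $\R^n$, in which case the Jacobian $w^{n/p}/(1-\|z\|_p^p)^{n/p+1}$ is read off from the matrix-determinant lemma applied to a rank-one perturbation of the identity.
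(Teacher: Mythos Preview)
Your argument is correct. Note, however, that the paper does not actually prove this lemma: it is quoted verbatim as Theorem~3 of Barthe--Gu\'edon--Mendelson--Naor \cite{BartheGuedonEtAl}, and the paper's ``proof'' consists solely of that citation. So there is no in-paper proof to compare against.

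What you have written is essentially the standard derivation from \cite{BartheGuedonEtAl}: the Schechtman--Zinn polar factorisation $Y^{(n)} = R\,U$ with $R^p\sim\Gamma(n/p,1/p)$ and $U\sim\bC_{n,p}$ independent, followed by the one-dimensional change of variables $R^p\mapsto S^p = R^p/(R^p+w)$ conditionally on $W_n=w>0$, and the splitting off of the atom at $w=0$ which produces the cone-measure part. Your bookkeeping of the Jacobian and of the normalising constants (in particular the use of $n\,\Gamma(n/p)=p\,\Gamma(1+n/p)$ and the polar-integration identity to pass from the radial density $f_S$ to the Lebesgue density on $\B_p^n$) is accurate, and the final expression matches $h(\|z\|_p)/\vol(\B_p^n)$ as required. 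The alternative route you mention---the direct $n$-dimensional change of variables with Jacobian computed via the matrix-determinant lemma---is equally valid and is in fact closer to the original presentation in \cite{BartheGuedonEtAl}.
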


\subsection{Moderate and large deviations}

Let $(X_n)_{n\in\N}$ be a sequence of random vectors on some probability space $(\Omega,\mathcal A,\Pro)$ taking values in a Hausdorff topological space $\XX$. Further, let $(s_n)_{n\in\N}$ be an increasing sequence of real numbers and $\rate:\XX\to[0,\infty]$ be a lower semi-continuous function with compact level sets $\{x\in\R^d\,:\, \rate(x) \leq \alpha \}$ for all $\alpha\in\R$. One says that $(X_n)_{n\in\N}$ satisfies a large deviations principle (LDP) on $\XX$ with speed $s_n$ and good rate function $\rate$, provided that
\begin{equation*}
\begin{split}
-\inf_{x\in A^\circ}\rate(x) &\leq\liminf_{n\to\infty}s_n^{-1}\log\Pro[X_n\in A]\leq\limsup_{n\to\infty}s_n^{-1}\log\Pro[X_n\in A]\leq-\inf_{x\in\overline{A}}\rate(x)
\end{split}
\end{equation*}
for all Borel sets $A\subseteq\XX$, where $A^\circ$ denotes the interior and $\overline A$ the closure of $A$. As already discussed in the introduction, a moderate deviations principle (MDP) is formally the same as an LDP, but on a different rage of scales.

We shall now present a few basic results from large deviations theory which are needed below. Assume that a sequence $(X_n)_{n\in\N}$ of random variables satisfies an LDP with speed $s_n$ and rate function $I$. Suppose now that $(Y_n)_{n\in\N}$ is a sequence of random variables that are `close' to the ones from the first sequence. The next result provides conditions under which in such a situation an LDP from the first can be transferred to the second sequence.

\begin{lemma}[Exponential equivalence, Theorem 4.2.13 in \cite{DZ}]\label{prop:exponentially equivalent}
Let $(X_n)_{n\in\N}$ and $(Y_n)_{n\in\N}$ be two sequence of $\R^d$-valued random vectors and assume that $(X_n)_{n\in\N}$ satisfies an LDP on $\R^d$ with speed $s_n$ and rate function $\rate$. Further, suppose that the two sequences $(X_n)_{n\in\N}$ and $(Y_n)_{n\in\N}$ are exponentially equivalent, which is to say that
$$
\limsup_{n\to\infty}s_n^{-1}\log\Pro[\|X_n-Y_n\|_2>\delta] = -\infty
$$
for any $\delta>0$. Then $(Y_n)_{n\in\N}$ satisfies an LDP on $\R^d$ with the same speed and the same rate function.
\end{lemma}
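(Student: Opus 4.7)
The plan is to derive both the upper and lower LDP bounds for $(Y_n)_{n\in\N}$ from the corresponding bounds for $(X_n)_{n\in\N}$ by comparing the events $\{Y_n\in A\}$ and $\{X_n\in A\}$ via a small enlargement of $A$ and absorbing the difference into the exponentially negligible event $\{\|X_n-Y_n\|_2>\delta\}$. The only non-trivial analytic input beyond this comparison is a standard continuity property of $\inf I$ under fattening that uses the good rate function assumption.

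For the upper bound, let $F\subseteq\R^d$ be closed and, for $\delta>0$, set $F^\delta=\{x\in\R^d:d(x,F)\leq\delta\}$. I would use the inclusion
\[
\{Y_n\in F\}\subseteq \{X_n\in F^\delta\}\cup\{\|X_n-Y_n\|_2>\delta\},
\]
so that $\Pro[Y_n\in F]\leq \Pro[X_n\in F^\delta]+\Pro[\|X_n-Y_n\|_2>\delta]$. Applying the principle of the largest term (\emph{i.e.}\ $\log(a+b)\leq \log 2+\max(\log a,\log b)$), dividing by $s_n$, taking $\limsup$, and using the LDP upper bound for $(X_n)_{n\in\N}$ together with the exponential equivalence hypothesis gives
\[
\limsup_{n\to\infty}s_n^{-1}\log\Pro[Y_n\in F]\leq -\inf_{x\in F^\delta}\rate(x).
\]
Letting $\delta\downarrow 0$ yields $-\inf_{x\in F}\rate(x)$ provided $\inf_{F^\delta}\rate\nearrow\inf_F\rate$; this is precisely where the goodness of $\rate$ enters. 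If $\alpha:=\inf_F\rate$ is finite, one picks $x_\delta\in F^\delta$ with $\rate(x_\delta)\leq\inf_{F^\delta}\rate+\delta$, notes that eventually $x_\delta$ lies in the compact level set $\{\rate\leq\alpha\}$, extracts a convergent subsequence $x_{\delta_k}\to x_0\in F$ (using that $F$ is closed), and concludes by lower semi-continuity that $\rate(x_0)\leq\liminf\rate(x_{\delta_k})\leq\alpha$, so the limit equals $\alpha$. This continuity-under-fattening step is what I expect to be the main technical obstacle.

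For the lower bound, let $G\subseteq\R^d$ be open, fix $x\in G$ with $\rate(x)<\infty$, and choose $\delta>0$ with $B(x,2\delta)\subseteq G$. The symmetric inclusion
\[
\{X_n\in B(x,\delta)\}\subseteq \{Y_n\in B(x,2\delta)\}\cup\{\|X_n-Y_n\|_2>\delta\}
\]
yields $\Pro[Y_n\in G]\geq \Pro[X_n\in B(x,\delta)]-\Pro[\|X_n-Y_n\|_2>\delta]$. Fix $\varepsilon>0$ and any constant $M>\rate(x)+\varepsilon$; by the LDP lower bound for $(X_n)_{n\in\N}$ the first probability is at least $e^{-s_n(\rate(x)+\varepsilon)}$ for all large $n$, while the second is at most $e^{-Ms_n}$ by exponential equivalence. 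Hence the difference is at least $\tfrac12 e^{-s_n(\rate(x)+\varepsilon)}$ eventually, and taking logarithms gives $\liminf_{n\to\infty}s_n^{-1}\log\Pro[Y_n\in G]\geq -(\rate(x)+\varepsilon)$. Since $\varepsilon>0$ and $x\in G$ with $\rate(x)<\infty$ are arbitrary, the lower bound $-\inf_{x\in G}\rate(x)$ follows, completing the proof.
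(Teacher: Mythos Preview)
Your proof is correct and is essentially the standard argument found in Dembo--Zeitouni \cite[Theorem~4.2.13]{DZ}. Note, however, that the paper does not give its own proof of this lemma at all: it is stated in the preliminaries section purely as a citation to \cite{DZ}, so there is nothing in the paper to compare your argument against. One minor remark: you correctly invoke goodness of the rate function for the upper bound (the $\inf_{F^\delta}\rate\to\inf_F\rate$ step), even though the lemma as stated only says ``rate function''; this is consistent with the paper's conventions, since its definition of an LDP always requires the rate function to be good.
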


Next, we recall what is known as Cram\'er's theorem. It provides an LDP for sequences of independent and identically distributed random variables.

\begin{lemma}[Cram\'er's theorem, Theorem 2.2.3 in \cite{DZ}]\label{lem:cramer}
Let $(X_n)_{n\in\N}$ be a sequence of i.\,i.\,d.\,random variables. Assume that $\E e^{\lambda X_1}<\infty$ for all $|\lambda|<\lambda_0$ for some $\lambda_0>0$. Then the sequence of random variables ${1\over n}\sum_{i=1}^n X_i$ satisfies an LDP on $\R$ with speed $n$ and good rate function $\rate(x)=\sup\big\{\lambda x-\log\E e^{\lambda X_1}:\lambda\in\R\big\}$, i.e., $\rate$ is the Legendre-Fenchel transform of the log-moment generating function $\log\E e^{\lambda X_1}$.
\end{lemma}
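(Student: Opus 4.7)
The plan is the classical two-step derivation: prove the large deviations upper and lower bounds separately, with both estimates controlled through the cumulant generating function $\Lambda(\lambda) := \log \E e^{\lambda X_1}$ and its Legendre-Fenchel dual $\rate = \Lambda^*$. The hypothesis $\E e^{\lambda X_1} < \infty$ for $|\lambda| < \lambda_0$ implies, via dominated convergence, that $\Lambda$ is smooth and convex on $(-\lambda_0, \lambda_0)$ with $\Lambda(0) = 0$ and $\Lambda'(0) = \E X_1$. It moreover yields the coercivity bound $\rate(x) \ge \lambda|x| - \Lambda(\lambda) - \Lambda(-\lambda)$ for any $0 < \lambda < \lambda_0$, which together with the lower semi-continuity and convexity of $\Lambda^*$ forces $\rate$ to be a \emph{good} rate function with compact sublevel sets. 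Write $S_n := X_1 + \cdots + X_n$ throughout.

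For the upper bound, I would apply Markov's inequality to $e^{\lambda S_n}$: for $\lambda \ge 0$ and $a \ge \E X_1$,
\[
\Pro[S_n/n \ge a] \le e^{-n\lambda a}\, \E e^{\lambda S_n} = e^{-n(\lambda a - \Lambda(\lambda))},
\]
and optimizing over $\lambda \ge 0$ gives $\Pro[S_n/n \ge a] \le e^{-n\rate(a)}$; a symmetric bound holds for $a \le \E X_1$. Passing from half-lines to arbitrary closed sets $F \subseteq \R$ requires exponential tightness, $\Pro[|S_n|/n \ge M] \le 2 e^{-n(\lambda M - \Lambda(\lambda))}$ for $\lambda \in (0, \lambda_0)$, which reduces the task to compact $F$; the latter is then covered by finitely many small intervals, to each of which the half-line bound applies at the endpoint nearest $\E X_1$. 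Sending $n \to \infty$ yields $\limsup n^{-1}\log \Pro[S_n/n \in F] \le -\inf_{x \in F}\rate(x)$.

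For the matching lower bound, the main tool is an exponential tilt. Given an open $G \subseteq \R$ and $y \in G$ with $\rate(y) < \infty$ and $y$ lying in the image of $\Lambda'|_{(-\lambda_0,\lambda_0)}$, pick $\lambda^*$ with $\Lambda'(\lambda^*) = y$ so that $\rate(y) = \lambda^* y - \Lambda(\lambda^*)$, and introduce the tilted law $\tilde \Pro$ under which the $X_i$ are i.i.d.\ with density $e^{\lambda^* x - \Lambda(\lambda^*)}$ with respect to the original law. Then $\tilde \E X_1 = \Lambda'(\lambda^*) = y$, so the weak law of large numbers gives $\tilde \Pro[S_n/n \in (y-\varepsilon, y+\varepsilon)] \to 1$ for any $\varepsilon > 0$ with $(y-\varepsilon, y+\varepsilon) \subseteq G$. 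Unwinding the Radon-Nikodym derivative gives
\[
\Pro[S_n/n \in G] \ge e^{-n(\rate(y) + |\lambda^*|\varepsilon)}\,\tilde \Pro[S_n/n \in (y-\varepsilon, y+\varepsilon)],
\]
and letting $n \to \infty$ and then $\varepsilon \to 0$ yields the desired $-\rate(y)$.

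The main obstacle is the exceptional case where $y$ does not lie in $\Lambda'((-\lambda_0, \lambda_0))$---for instance, a boundary point of the effective domain of $\rate$ at which $\Lambda$ is steep---so that no interior tilting parameter $\lambda^*$ realizes $y$. The standard remedy is an approximation argument: one picks a sequence $y_k \to y$ from within $\Lambda'((-\lambda_0,\lambda_0))$ with $\rate(y_k) \to \rate(y)$, applies the tilting lower bound at each $y_k$, and uses the convexity and lower semi-continuity of $\rate$ together with the openness of $G$ to pass to the limit. Combined with the upper bound above, this yields the LDP with speed $n$ and good rate function $\rate = \Lambda^*$.
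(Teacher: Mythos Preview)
The paper does not prove this lemma at all: it is stated as a preliminary result and attributed to Dembo--Zeitouni \cite[Theorem 2.2.3]{DZ}, with no argument given. Your sketch is the standard Chernoff-plus-tilting proof of Cram\'er's theorem and is essentially correct in outline, so there is nothing to compare on the level of approach.

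That said, a couple of points in your write-up are imprecise. First, in the lower bound you restrict to $y$ in the image of $\Lambda'|_{(-\lambda_0,\lambda_0)}$, but the optimizing $\lambda^*$ for a given $y$ need not lie in $(-\lambda_0,\lambda_0)$; the correct domain is the interior of $\{\lambda:\Lambda(\lambda)<\infty\}$, which may be strictly larger. Second, the approximation step for boundary $y$ is more delicate than you indicate: one cannot always find $y_k\to y$ with $\rate(y_k)\to\rate(y)$ by merely invoking convexity and lower semi-continuity (lower semi-continuity gives the inequality in the wrong direction), and the full argument in Dembo--Zeitouni handles this via a subexponential truncation or a separate treatment of the case where the law has an atom at the boundary of its support. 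These are repairable gaps in a sketch, but worth flagging.
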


Let $d_1,d_2\in\N$ and suppose that $(X_n)_{n\in\N}$ is a sequence of $\R^{d_1}$-valued random vectors and that $(Y_n)_{n\in\N}$ is a sequence of $\R^{d_2}$-random vectors. We assume that both sequences satisfy LDPs with the same speed. The next result, taken from \cite[Proposition 2.4]{APT2018}, yields that also the sequence of $\R^{d_1+d_2}$-valued random vectors $(X_n,Y_n)$ satisfies an LDP and provides the form of the rate function.

\begin{lemma}\label{JointRateFunction}
Assume that $(X_n)_{n\in\N}$ satisfies an LDP on $\R^{d_1}$ with speed $s_n$ and good rate function $\rate_{\bf X}$ and that $(Y_n)_{n\in\N}$ satisfies an LDP on $\R^{d_2}$ with speed $s_n$ and good rate function $\rate_{\bf Y}$. Then, if $X_n$ and $Y_n$ are independent for each $n\in\N$, the sequence of random vectors $(X_n,Y_n)$ satisfies an LDP on $\R^{d_1+d_2}$ with speed $s_n$ and good rate function $\rate$ given by $\rate(x):=\rate_{\bX}(x_1)+\rate_{\bY}(x_2)$, $x=(x_1,x_2)\in\R^{d_1}\times\R^{d_2}$.
\end{lemma}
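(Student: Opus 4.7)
The plan is a standard product/tensorization LDP argument: verify that the claimed $\rate$ is a good rate function, then establish the LDP upper and lower bounds separately, exploiting independence on product sets.

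First I would check goodness of $\rate(x_1,x_2)=\rate_\bX(x_1)+\rate_\bY(x_2)$. Lower semicontinuity is inherited from the summands. For the compactness of level sets, note that
\[
\{(x_1,x_2):\rate(x_1,x_2)\leq\alpha\}\subseteq\{x_1:\rate_\bX(x_1)\leq\alpha\}\times\{x_2:\rate_\bY(x_2)\leq\alpha\},
\]
which is a closed subset of a compact set by goodness of the marginal rate functions, hence compact.

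For the LDP lower bound on an open set $G\subseteq\R^{d_1+d_2}$, I would fix $(x_1,x_2)\in G$ and choose open neighbourhoods $U\ni x_1$, $V\ni x_2$ with $U\times V\subseteq G$. Independence together with the marginal LDP lower bounds applied to $U$ and $V$ gives
\[
\liminf_{n\to\infty}s_n^{-1}\log \Pro[(X_n,Y_n)\in G]\geq -\rate_\bX(x_1)-\rate_\bY(x_2)=-\rate(x_1,x_2),
\]
and the supremum over $(x_1,x_2)\in G$ yields the desired bound.

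The upper bound is the main technical step and requires exponential tightness. From the marginal upper bound applied to closed sets $\{x:\|x\|_2\geq R\}$ together with goodness of $\rate_\bX, \rate_\bY$, for any $L>0$ I can produce compacts $K_L^\bX\subseteq\R^{d_1}$, $K_L^\bY\subseteq\R^{d_2}$ for which $\limsup s_n^{-1}\log\Pro[X_n\notin K_L^\bX]\leq -L$ and similarly for $Y_n$. For a closed $F\subseteq\R^{d_1+d_2}$, I would split
\[
\Pro[(X_n,Y_n)\in F]\leq \Pro[(X_n,Y_n)\in F\cap (K_L^\bX\times K_L^\bY)]+\Pro[X_n\notin K_L^\bX]+\Pro[Y_n\notin K_L^\bY].
\]
For the compact piece, for each $(x_1,x_2)$ in the (compact) intersection and each $\epsilon>0$, lower semicontinuity of the marginals lets me pick a closed product box $\bar U\times \bar V$ on which $\rate_\bX, \rate_\bY$ differ from their values at $(x_1,x_2)$ by at most $\epsilon$ and on which independence plus the marginal upper bounds give
\[
\limsup_{n\to\infty} s_n^{-1}\log\Pro[(X_n,Y_n)\in \bar U\times \bar V]\leq -\rate(x_1,x_2)+2\epsilon.
\]
Extracting a finite subcover of $F\cap(K_L^\bX\times K_L^\bY)$, applying the finite-sum rule $\limsup s_n^{-1}\log\sum_{i=1}^N p_n^{(i)}=\max_i\limsup s_n^{-1}\log p_n^{(i)}$, and then sending $\epsilon\to 0$ and $L\to\infty$ produces the upper bound.

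The main obstacle I anticipate is purely logistical: the careful bookkeeping in the upper bound when extracting a finite subcover and combining it uniformly with the exponential tightness tails. The rest is routine, using only independence and the definition of an LDP; an alternative route would be to invoke the joint Laplace principle via Varadhan's lemma once exponential tightness is established, but the direct argument above is quite short.
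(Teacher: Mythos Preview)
Your argument is correct and is the standard tensorization proof of this well-known fact. The only place I would tighten the wording is in the upper bound step: lower semicontinuity of $\rate_\bX$ only guarantees a neighbourhood on which $\rate_\bX\geq \rate_\bX(x_1)-\epsilon$, not that the values ``differ by at most $\epsilon$'', but this one-sided inequality is exactly what the marginal LDP upper bound needs, so the conclusion is unaffected. You should also truncate at $\min(\rate_\bX(x_1),1/\epsilon)$ to handle points where the rate function is infinite, but this is routine.

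As for comparison with the paper: there is nothing to compare. The paper does not prove this lemma; it is quoted as a preliminary result from \cite[Proposition 2.4]{APT2018} and used as a black box. Your proposal supplies a self-contained proof where the paper simply cites one.
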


Finally, we consider the possibility to transport a large deviations principle to another one by means of a continuous function, a result which is known as the so-called contraction principle.

\begin{lemma}[Contraction principle, Theorem 4.2.1 in \cite{DZ}]\label{lem:contraction principle}
Let $\XX$ and $\YY$ be two Hausdorff topological space and let
let $F:\XX\to\YY$ be a continuous function.
Further, let $(X_n)_{n\in\N}$ be a sequence of $\XX$-valued random elements that satisfies an LDP with speed $s_n$ and good rate function $\rate_\bX$. Then the sequence $(F(X_n))_{n\in\N}$ of $\YY$-valued random elements satisfies an LDP with the same speed and with good rate function $\rate = \rate_\bX\circ F^{-1}$, i.e.,
$$
\rate(y):=\inf\{\rate_\bX (x): x\in \XX, F(x)=y\},
\quad
y\in \YY,
$$
with the convention that $\rate(y)=+\infty$ if $F^{-1}(\{y\})=\varnothing$.
\end{lemma}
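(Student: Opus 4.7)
The plan is to verify two things in turn: first that $\rate = \rate_\bX \circ F^{-1}$ defined by $\rate(y) := \inf\{\rate_\bX(x) : x \in \XX,\; F(x) = y\}$ (with the convention $\inf \varnothing = +\infty$) is a good rate function on $\YY$, and second that the upper and lower large deviations bounds for $(F(X_n))_{n\in\N}$ can be transported from those for $(X_n)_{n\in\N}$ by pulling sets back through $F$ and using its continuity.

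For the first step, I would compute the sublevel sets of $\rate$ directly from the definition and observe that
\[
\{y \in \YY : \rate(y) \leq \alpha\} = F\bigl(\{x \in \XX : \rate_\bX(x) \leq \alpha\}\bigr),\qquad \alpha\in\R.
\]
Since $\rate_\bX$ is a good rate function, the set inside $F$ is compact in $\XX$, and continuity of $F$ transports this to compactness of its image in $\YY$; in a Hausdorff space, compactness of all sublevel sets yields in particular that they are closed, which gives the lower semi-continuity of $\rate$.

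For the second step, I would separately handle closed and open target sets. Let $C \subseteq \YY$ be closed. Continuity of $F$ yields that $F^{-1}(C) \subseteq \XX$ is closed, so the upper-bound half of the LDP for $(X_n)_{n\in\N}$ applies to give
\[
\limsup_{n\to\infty} s_n^{-1}\log\Pro[F(X_n) \in C] = \limsup_{n\to\infty} s_n^{-1}\log\Pro[X_n \in F^{-1}(C)] \leq -\inf_{x \in F^{-1}(C)} \rate_\bX(x).
\]
Unwinding definitions gives $\inf_{x \in F^{-1}(C)} \rate_\bX(x) = \inf_{y \in C} \inf_{x : F(x) = y} \rate_\bX(x) = \inf_{y \in C} \rate(y)$, which is exactly the required upper bound. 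An identical argument with $C$ replaced by an open set $U$, using that $F^{-1}(U)$ is then open in $\XX$ and invoking the lower-bound half of the LDP for $(X_n)_{n\in\N}$, yields the complementary lower bound.

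The proof is essentially formal; the only genuine step is the interchange of infima $\inf_{x \in F^{-1}(A)} \rate_\bX(x) = \inf_{y \in A} \rate(y)$, but this is immediate once the convention $\rate(y) = +\infty$ on $\YY \setminus F(\XX)$ is fixed, and the only topological input needed is the continuity of $F$ together with Hausdorffness of $\YY$.
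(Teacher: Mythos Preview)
The paper does not prove this lemma; it is quoted from Dembo and Zeitouni's textbook as a preliminary tool, so there is no ``paper's own proof'' to compare against. Your argument is essentially the standard textbook proof and is correct in outline.

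One point deserves a line of justification: the displayed identity
\[
\{y \in \YY : \rate(y) \leq \alpha\} = F\bigl(\{x \in \XX : \rate_\bX(x) \leq \alpha\}\bigr)
\]
is not quite immediate from the definition. The inclusion $\supseteq$ is clear, but for $\subseteq$ you need that whenever $\inf\{\rate_\bX(x): F(x)=y\}\leq\alpha$ the infimum is actually \emph{attained} at some $x$ with $\rate_\bX(x)\leq\alpha$. This holds because $F^{-1}(\{y\})$ is closed (continuity of $F$, Hausdorffness of $\YY$) and $\rate_\bX$ is lower semi-continuous with compact sublevel sets, so the infimum over the closed set $F^{-1}(\{y\})$ intersected with a compact sublevel set of $\rate_\bX$ is achieved. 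Once you insert that sentence, the proof is complete.
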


As explained before, a moderate deviations principle is formally nothing else than a large deviations principle and describes (in our set-up) the deviation probabilities at scales between a law of large numbers and a central limit theorem.
An important tool for us will be the following MDP for sums of independent and identically distributed random vectors.

\begin{lemma}[MDP for sums of random vectors, Theorem 3.7.1 in \cite{DZ}]\label{lem:MDPVector}
Let $(X_n)_{n\in\N}$ be a sequence of independent and identically distributed random vectors in $\R^d$ and let $(s_n)_{n\in\N}$ be sequence of positive real numbers such that $s_n=\omega(\sqrt{n})$ and $s_n=o(n)$. We assume that $X_1$ is centered, its covariance matrix $\bC=\Cov(X_1)$ is invertible, and $\log\E\, e^{\langle \lambda,X_1\rangle}<\infty$ for all $\lambda$ in a ball around the origin having positive radius. Then the sequence of random vectors $\frac{1}{s_n}\sum_{i=1}^nX_i$, $n\in\N$, satisfies an LDP with speed $s_n^2/n$ (i.e.,\ an MDP) and good rate function $\rate(x)={1\over 2}\langle x,\bC^{-1}x\rangle$, $x\in\R^d$.
\end{lemma}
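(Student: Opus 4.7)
The plan is to apply the Gärtner--Ellis theorem to the normalized sums $Y_n := \frac{1}{s_n}\sum_{i=1}^n X_i$, with target speed $a_n := s_n^2/n$, which indeed tends to infinity since $s_n = \omega(\sqrt n)$, and which is $o(s_n)$ since $s_n = o(n)$.

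First I would compute the scaled cumulant generating function. By independence,
\[
\Lambda_n(\lambda) \;:=\; \frac{1}{a_n}\log \E\, e^{a_n\langle \lambda, Y_n\rangle}
\;=\; \frac{n}{s_n^2}\log \E\, e^{(s_n/n)\langle \lambda, S_n\rangle}
\;=\; \frac{n^2}{s_n^2}\,\log \E\, e^{(s_n/n)\langle \lambda, X_1\rangle},
\]
where $S_n = \sum_{i=1}^n X_i$. Since the moment generating function $\varphi(\lambda) := \E e^{\langle \lambda, X_1\rangle}$ is finite on a ball around the origin, for every fixed $\lambda \in \R^d$ we have $(s_n/n)\lambda$ in this ball for all large $n$, and $\log \varphi$ is $C^\infty$ there.

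The key step is the Taylor expansion of $t \mapsto \log \E e^{t\langle\lambda,X_1\rangle}$ around $t=0$. Using $\E X_1 = 0$ and $\Cov(X_1) = \bC$,
\[
\log \E\, e^{t\langle \lambda,X_1\rangle} \;=\; \tfrac{t^2}{2}\langle \lambda, \bC\lambda\rangle + O(t^3),
\]
with the $O(t^3)$ term bounded uniformly for $\lambda$ in any compact set (using analyticity of $\log\varphi$ on a neighborhood of $0$). Substituting $t = s_n/n$ and multiplying by $n^2/s_n^2$ gives
\[
\Lambda_n(\lambda) \;=\; \tfrac{1}{2}\langle \lambda,\bC\lambda\rangle + O(s_n/n),
\]
and since $s_n/n \to 0$, we obtain $\Lambda_n(\lambda) \to \Lambda(\lambda) := \tfrac{1}{2}\langle \lambda, \bC\lambda\rangle$ for every $\lambda \in \R^d$.

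Finally, I would invoke the Gärtner--Ellis theorem: the pointwise limit $\Lambda$ is finite everywhere, differentiable, and essentially smooth (as a positive-definite quadratic form, using invertibility of $\bC$); hence $(Y_n)$ satisfies an LDP with speed $a_n = s_n^2/n$ and good rate function
\[
\Lambda^*(x) \;=\; \sup_{\lambda \in \R^d}\Bigl(\langle \lambda,x\rangle - \tfrac{1}{2}\langle \lambda,\bC\lambda\rangle\Bigr) \;=\; \tfrac{1}{2}\langle x, \bC^{-1}x\rangle,
\]
where the sup is attained at $\lambda = \bC^{-1}x$. The main technical nuisance is controlling the Taylor remainder $O((s_n/n)^3)$ uniformly enough that it survives multiplication by $n^2/s_n^2$; this is handled by restricting to a small enough ball around $0$ in which $\log\varphi$ admits a bound on its third derivative, which suffices since for the Gärtner--Ellis conclusion only pointwise (not uniform in $\lambda$) convergence of $\Lambda_n$ is required.
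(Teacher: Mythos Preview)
Your proof via the G\"artner--Ellis theorem is correct and is in fact the standard route to this result; the Taylor expansion of the cumulant generating function at scale $s_n/n$ and the identification of the Legendre transform of the limiting quadratic form are both carried out correctly. Note, however, that the paper does not give its own proof of this lemma: it is quoted as a preliminary from Dembo--Zeitouni (Theorem~3.7.1), so there is no in-paper argument to compare against---your sketch is essentially the argument one finds in that reference.
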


\begin{rmk}\label{rem:MDPExMom}
\rm There exist versions of Lemma \ref{lem:MDPVector} under less restrictive assumptions on the (exponential) moments of the involved random vectors, see \cite{ArconesMDP}, for example. However, such results do not lead to simplifications or improvements in our situation.
\end{rmk}

\section{Proof of the main results}\label{sec:Proofs}

\subsection{A probabilistic representation for the $q$-norm}\label{subsec:ProbRep}

In a first step we develop a probabilistic representation for the random variables $\|Z_n\|_q$, which will turn out to be useful for both, the proof of the central limit theorems and the moderate deviations principle. In what follows we let $Y_1,Y_2,\ldots$ be a sequence of independent $p$-generalized Gaussian random variables and define, for each $n\in\N$,
\begin{align*}
S_n^{(1)} := {1\over\sqrt{n}}\sum_{i=1}^n\big(|Y_i|^q-M_p(q)\big)\quad\text{and}\quad S_n^{(2)} := {1\over\sqrt{n}}\sum_{i=1}^n\big(|Y_i|^p-1\big)\,,
\end{align*}
where $0<p,q<\infty$.

\begin{lemma}[Probabilistic interpretation]\label{lem:ProbRep}

Fix $0<p<\infty$, $0<q<\infty$ and $n\in\N$. Let $\bW_n$ be a Borel probability measure on $[0,\infty)$. Let $Z_n\in\B_p^n$ be distributed according to $\bP_{\bW_n,n,p}$ and $W_n$ be distributed according to $\bW_n$ and independent of $Y_1,Y_2,\ldots$. Then
\begin{align*}
\|Z_n\|_q \overset{d}{=} n^{{1\over q}-{1\over p}}M_p(q)^{1/q}\bigg[1 + {S_n^{(1)}\over qM_p(q)\,\sqrt{n}} - {S_n^{(2)}\over p\,\sqrt{n}} - {W_n\over p\,n}+\Psi_p\Big({S_n^{(1)}\over\sqrt{n}},{S_n^{(2)}\over\sqrt{n}},{W_n\over n}\Big)\bigg],
\end{align*}
where $\Psi_p:\R^3\to\R$ is such that, for some $M,\delta>0$, we have $|\Psi_p(x,y,z)|\leq M\|(x,y,z)\|_2^2$ whenever $\|(x,y,z)\|_2^2<\delta$.
\end{lemma}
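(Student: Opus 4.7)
The plan is to unwind the probabilistic representation from Lemma~\ref{lem:prob representation} and then apply a two-variable Taylor expansion. By Lemma~\ref{lem:prob representation}, we may identify
\[
\|Z_n\|_q \;\overset{d}{=}\; \frac{\|Y^{(n)}\|_q}{(\|Y^{(n)}\|_p^p + W_n)^{1/p}}.
\]
The first move is to rewrite both the numerator and the denominator in terms of the centered sums $S_n^{(1)}$ and $S_n^{(2)}$. Using the moment identities \eqref{eq:Moments}, and the fact that $M_p(p)=1$, we get
\[
\|Y^{(n)}\|_q^q = nM_p(q) + \sqrt{n}\,S_n^{(1)} \qquad\text{and}\qquad \|Y^{(n)}\|_p^p = n + \sqrt{n}\,S_n^{(2)}.
\]

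Factoring out the leading powers of $n$ from the $q$-th root and the $p$-th root yields
\[
\|Z_n\|_q \;\overset{d}{=}\; n^{1/q - 1/p}\,M_p(q)^{1/q}\,\frac{\bigl(1 + u_n\bigr)^{1/q}}{\bigl(1 + v_n\bigr)^{1/p}},
\]
with
\[
u_n := \frac{S_n^{(1)}}{M_p(q)\sqrt{n}}, \qquad v_n := \frac{S_n^{(2)}}{\sqrt{n}} + \frac{W_n}{n}.
\]
The next step is a first-order Taylor expansion of the smooth function $(x,y)\mapsto (1+x)^{1/q}(1+y)^{-1/p}$ around the origin, which gives, for $\|(x,y)\|_2$ less than some fixed $\delta_0>0$,
\[
(1+x)^{1/q}(1+y)^{-1/p} = 1 + \frac{x}{q} - \frac{y}{p} + R(x,y),
\]
where the remainder $R$ is smooth and satisfies $|R(x,y)|\le M_0\|(x,y)\|_2^2$ for some constant $M_0>0$ depending only on $p,q,\delta_0$ (standard Taylor with integral remainder). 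Substituting $(x,y)=(u_n,v_n)$ back and collecting terms produces exactly the claimed representation, provided we define
\[
\Psi_p\Bigl(\tfrac{S_n^{(1)}}{\sqrt{n}},\tfrac{S_n^{(2)}}{\sqrt{n}},\tfrac{W_n}{n}\Bigr) := R(u_n,v_n),
\]
i.e.\ set $\Psi_p(a,b,c) := R(a/M_p(q),\,b+c)$.

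The only step that requires any care is the quadratic bound on $\Psi_p$. Since $\Psi_p(a,b,c) = R(a/M_p(q),b+c)$ and $R$ itself is quadratically bounded on a neighborhood of the origin, an elementary norm estimate gives
\[
|\Psi_p(a,b,c)| \le M_0\bigl\|(a/M_p(q),\,b+c)\bigr\|_2^2 \le M\,\|(a,b,c)\|_2^2
\]
whenever $\|(a,b,c)\|_2<\delta$, for suitable $M,\delta>0$ depending only on $p,q,M_p(q)$. This yields the stated bound on $\Psi_p$ and completes the representation. There is no genuine obstacle here; the entire argument is a direct Taylor expansion once the representation of Lemma~\ref{lem:prob representation} is invoked.
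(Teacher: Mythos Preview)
Your proof is correct and follows essentially the same route as the paper: invoke the representation from Lemma~\ref{lem:prob representation}, rewrite numerator and denominator via $S_n^{(1)}$ and $S_n^{(2)}$, factor out $n^{1/q-1/p}M_p(q)^{1/q}$, and Taylor-expand the remaining smooth function around the origin. The only cosmetic difference is that the paper expands the three-variable function $(x,y,z)\mapsto (1+x/M_p(q))^{1/q}(1+y+z)^{-1/p}$ directly, whereas you expand the two-variable function $(x,y)\mapsto(1+x)^{1/q}(1+y)^{-1/p}$ and then pull back along $(a,b,c)\mapsto(a/M_p(q),\,b+c)$; the two are equivalent and your extra line converting the quadratic bound on $R$ into one on $\Psi_p$ is exactly what is needed.
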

\begin{proof}
We first observe that as a consequence of Lemma \ref{lem:prob representation} the random vector $Z_n$ has the probabilistic representation
\begin{align*}
Z_n \overset{d}{=} {Y^{(n)}\over(\|Y^{(n)}\|_p^p+W_n)^{1/p}},
\end{align*}
where $Y^{(n)}=(Y_1,\ldots,Y_n)$ is a vector of independent $p$-generalized Gaussian random variables and $W_n$ is a random variable with distribution $\bW_n$, which is independent of $Y^{(n)}$. Thus
\begin{align*}
\|Z_n\|_q \stackrel{d}{=} {\|Y^{(n)}\|_q\over(\|Y^{(n)}\|_p^p+W_n)^{1/p}}.
\end{align*}
Recalling the definitions of the random variables $S_n^{(1)}$ and $S_n^{(2)}$, we can rewrite the last expression as
\begin{align}\label{eq:ProbRepEq1}
\|Z_n\|_q \stackrel{d}{=} {\big(\sqrt{n}S_n^{(1)}+nM_p(q)\big)^{1/q}\over\big(\sqrt{n}S_n^{(2)}+n+W_n\big)^{1/p}} = n^{{1\over q}-{1\over p}}M_p(q)^{1/q}{\big(1+{S_n^{(1)}\over\sqrt{n}M_p(q)}\big)^{1/q}\over\big(1+{S_n^{(2)}\over\sqrt{n}}+{W_n\over n}\big)^{1/p}}.
\end{align}
Next, we define the function
\begin{align*}
F:D_F\subset\R^3\to\R,\quad (x,y,z)\mapsto{\big(1+{x\over M_p(q)}\big)^{1/q}\over(1+y+z)^{1/p}},
\end{align*}
where $D_F$ stands for the domain of $F$. Clearly, some open neighborhood of $(0,0,0)$ is contained in $D_F$, and a Taylor expansion of $F$ around $(0,0,0)$ shows that for all $(x,y,z)\in D_F$,
\begin{align*}
F(x,y,z) = 1+{x\over qM_p(q)}-{y\over p}-{z\over p}+\Psi_p(x,y,z),
\end{align*}
where the function $\Psi_p:D_F\to\R$ is such that, for some $M,\delta>0$, we have $|\Psi_p(x,y,z)|\leq M\|(x,y,z)\|_2^2$ whenever $\|(x,y,z)\|_2^2<\delta$. Combining this with the representation \eqref{eq:ProbRepEq1} for $\|Z_n\|_q$ proves the claim.
\end{proof}

\subsection{Proof of the central limit theorem (Theorem \ref{thm:CLT})}\label{subsec:ProofCLT}

For each $n\in\N$ let us define the random variable
\begin{align}\label{eq:DefV_n}
V_n := \sqrt{n}\bigg({n^{{1\over p}-{1\over q}}\over M_p(q)^{1/q}}\|Z_n\|_q-1\bigg).
\end{align}
It follows from Lemma \ref{lem:ProbRep} that
\begin{align*}
V_n \overset{d}{=} {S_n^{(1)}\over qM_p(q)} - {S_n^{(2)}\over p} - {W_n\over p\,\sqrt{n}} + \sqrt{n}\,\Psi_p\Big({S_n^{(1)}\over\sqrt{n}},{S_n^{(2)}\over\sqrt{n}},{W_n\over n}\Big).
\end{align*}
For any $n\in\N$, we decompose $V_n$ into the random variables
\begin{align*}
T_n := {S_n^{(1)}\over qM_p(q)} - {S_n^{(2)}\over p} - {W_n\over p\,\sqrt{n}}\quad\text{and}\quad R_n:=\sqrt{n}\,\Psi_p\Big({S_n^{(1)}\over\sqrt{n}},{S_n^{(2)}\over\sqrt{n}},{W_n\over n}\Big)\,.
\end{align*}
Slutsky's theorem (see \cite[Proposition A.42 (b)]{BassStochasticProcesses}) completes the proof of Theorem \ref{thm:CLT} once we show that
\begin{align*}
T_n \todistr N\quad\text{and}\quad R_n\toprob 0,
\end{align*}
where $N\sim\mathcal{N}(0,\sigma^2)$ is the centered Gaussian random variable as in Theorem \ref{thm:CLT}.

Assumption \eqref{eq:AssumptionsWn}  says that $W_n/\sqrt{n}$ converges in distribution to $0$, as $n\to\infty$. Therefore, the multivariate central limit theorem  applied to $(S_n^{(1)}, S_n^{(2)})$ and the continuous mapping theorem yield
\begin{align*}
T_n = {S_n^{(1)}\over qM_p(q)} - {S_n^{(2)}\over p} - {W_n\over p\,\sqrt{n}} \,\,\todistr\,\, {\xi\over qM_p(q)}-{\eta\over p}\,,
\end{align*}
where $(\xi,\eta)$ is a centered Gaussian random vector in $\R^2$ with covariance matrix $\Sigma$ given by
\begin{align*}
\Sigma = \begin{pmatrix}
M_p(2q)-M_p(q)^2 & M_p(p+q)-M_p(q) \\
M_p(p+q)-M_p(q) & M_p(2p)-1
\end{pmatrix}.
\end{align*}
As a consequence, ${\xi\over qM_p(q)}-{\eta\over p}$ is a centered Gaussian random variable $N$ with variance
\begin{align*}
\Var\bigg({\xi\over qM_p(q)}-{\eta\over p}\bigg) &= {M_p(2q)-M_p(q)^2\over q^2M_p(q)^2} + {M_p(2p)-1\over p^2} - 2\,{M_p(p+q)-M_p(q)\over pqM_p(q)}\\
&={\Gamma({1\over p})\Gamma({2q+1\over p})\over q^2\Gamma({q+1\over p})^2}-{1\over p}-{1\over q^2}=\sigma^2.
\end{align*}

Finally, we shall argue that $R_n\toprob0$. To this end, we write
\begin{align*}
R_n = \sqrt{n}\,\bigg({(S_n^{(1)})^2\over n}+{(S_n^{(2)})^2\over n}+{W_n^2\over n^2}\bigg)\,{\Psi_p\Big({S_n^{(1)}\over\sqrt{n}},{S_n^{(2)}\over\sqrt{n}},{W_n\over n}\Big)\over\Big\|\Big({S_n^{(1)}\over\sqrt{n}},{S_n^{(2)}\over\sqrt{n}},{W_n\over n}\Big)\Big\|_2^2}\,.
\end{align*}
Since there exist $\delta, M>0$ such that $|\Psi_p(x,y,z)|\leq M\|(x,y,z)\|_2^2$ whenever $\|(x,y,z)\|_2^2<\delta$, we obtain
\begin{align*}
\Pro\left[{\Psi_p\Big({S_n^{(1)}\over\sqrt{n}},{S_n^{(2)}\over\sqrt{n}},{W_n\over n}\Big)\over\Big\|\Big({S_n^{(1)}\over\sqrt{n}},{S_n^{(2)}\over\sqrt{n}},{W_n\over n}\Big)\Big\|_2^2}>M\right] &\leq \Pro\bigg[\Big\|\Big({S_n^{(1)}\over\sqrt{n}},{S_n^{(2)}\over\sqrt{n}},{W_n\over n}\Big)\Big\|_2^2 > \delta\bigg]\\
&\leq \Pro\bigg[{S_n^{(1)}\over\sqrt{n}}>{\sqrt{\delta\over 3}}\,\bigg]+\Pro\bigg[{S_n^{(2)}\over\sqrt{n}}>{\sqrt{\delta\over 3}}\,\bigg]+\Pro\bigg[{W_n\over n}>{\sqrt{\delta\over 3}}\,\bigg].
\end{align*}
The weak law of large numbers ensures that, as $n\to\infty$, the first two probabilities converge to zero, while our assumption \eqref{eq:AssumptionsWn} on the random variables $W_n$ ensures that the last probability tends to zero as well. Thus, for any $\varepsilon>0$, we have that
\begin{align*}
\Pro\big[R_n>\varepsilon\big] &\leq \Pro\bigg[\sqrt{n}\Big({(S_n^{(1)})^2\over n}+{(S_n^{(2)})^2\over n}+{W_n^2\over n^2}\Big)>{\varepsilon\over M}\bigg] + \Pro\left[{\Psi_p\Big({S_n^{(1)}\over\sqrt{n}},{S_n^{(2)}\over\sqrt{n}},{W_n\over n}\Big)\over\Big\|\Big({S_n^{(1)}\over\sqrt{n}},{S_n^{(2)}\over\sqrt{n}},{W_n\over n}\Big)\Big\|_2^2}>M\right]\\
&= \Pro\bigg[{(S_n^{(1)})^2\over\sqrt{n}}+{(S_n^{(2)})^2\over\sqrt{n}}+{W_n^2\over n^{3/2}}>{\varepsilon\over M}\bigg] + \Pro\left[{\Psi_p\Big({S_n^{(1)}\over\sqrt{n}},{S_n^{(2)}\over\sqrt{n}},{W_n\over n}\Big)\over\Big\|\Big({S_n^{(1)}\over\sqrt{n}},{S_n^{(2)}\over\sqrt{n}},{W_n\over n}\Big)\Big\|_2^2}>M\right].
\end{align*}
Again by the weak law of large numbers we have that $S_n^{(1)}/\sqrt{n}$ and $S_n^{(2)}/\sqrt{n}$ both converge to zero in probability, as $n\to\infty$. Moreover, the central limit theorem implies that $S_n^{(1)}$ and $S_n^{(2)}$ converge in distribution to non-degenerate Gaussian random variables. Hence, Slutsky's theorem implies that $(S_n^{(1)})^2/\sqrt{n}$ and $(S_n^{(2)})^2/\sqrt{n}$ both converge to zero in distribution. Since the random variables are defined on the same probability space and because the limit is (almost surely) constant, we even have that $(S_n^{(1)})^2/\sqrt{n}$ and $(S_n^{(2)})^2/\sqrt{n}$ converge to zero in probability. Finally, $W_n^2/n^{3/2}$ also converges to zero in probability by our assumption \eqref{eq:AssumptionsWn}. Thus, the first probability in the last expression also converges to zero, while the second summand has already been treated before. As a consequence, we conclude that indeed
\[
R_n\toprob 0\,,
\]
which completes the argument. \hfill $\Box$

\subsection{Proof of the generalized central limit theorem (Theorem \ref{thm:CLT general Version2})}\label{subsec:ProofGeneralCLT}

Since the proof of Theorem \ref{thm:CLT general Version2} is very similar to the one of Theorem \ref{thm:CLT}, we restrict ourselves to the details that need to be adapted.

First of all, we recall that
\begin{align*}
S_n^{(1)} = {1\over\sqrt{n}}\sum_{i=1}^n\big(|Y_i|^q-M_p(q)\big)\qquad\text{and}\qquad S_n^{(2)} = {1\over\sqrt{n}}\sum_{i=1}^n\big(|Y_i|^p-1\big)\,.
\end{align*}
Then, following with minimal changes the proof of Lemma \ref{lem:ProbRep}, we obtain
\begin{align*}
\|Z_n\|_q
&\stackrel{d}{=} {\big(\sqrt{n}S_n^{(1)}+nM_p(q)\big)^{1/q}\over\big(\sqrt{n}S_n^{(2)}+n+W_n\big)^{1/p}}
=
n^{{1\over q}-{1\over p}}M_p(q)^{1/q}
{
\big(1+{S_n^{(1)}\over\sqrt{n}M_p(q)}\big)^{1/q}\over\big(1+{S_n^{(2)}\over\sqrt{n}}+{{W_n^*}\over {\sqrt n}} + {{\mu_n}\over n}\big)^{1/p}}\\
&=
n^{{1\over q}-{1\over p}}M_p(q)^{1/q}
{
\big(1+{S_n^{(1)}\over\sqrt{n}M_p(q)}\big)^{1/q}\over
\big(1+\frac{\mu_n}{n}\big)^{1/p} \bigg(1+{S_n^{(2)}\over(1+\frac{\mu_n}{n})\sqrt{n}}+{{W_n^*}\over {(1+\frac{\mu_n}{n})\sqrt n}}\bigg)^{1/p}}.
\end{align*}
We define for each $n\in\N$ the random variable
$$
V_{n}
:=
\sqrt{n}\Bigg({n^{{1\over p}-{1\over q}} \frac{(1+\frac{\mu_n}{n})^{1/p}}{ M_p(q)^{1/q}}}\|Z_n\|_q-1\Bigg)
\stackrel{d}{=}
\sqrt n \left({\big(1+{S_n^{(1)}\over\sqrt{n}M_p(q)}\big)^{1/q}\over
\bigg(1+{S_n^{(2)}\over(1+\frac{\mu_n}{n})\sqrt{n}}+{{W_n^*}\over {(1+\frac{\mu_n}{n})\sqrt n}}\bigg)^{1/p}} -1\right).
$$
In the same way as in the proof of Lemma \ref{lem:ProbRep}, one shows that $V_{n}\stackrel{d}{=}T_{n}+R_{n}$ with
\begin{align*}
T_n := {S_n^{(1)}\over qM_p(q)} - {S_n^{(2)}\over p(1+\frac{\mu_n}{n})} - {W_n^*\over p(1+\frac{\mu_n}{n})}\qquad\text{and}\qquad R_n:=\sqrt{n}\Psi_p\bigg(\frac{S_n^{(1)}}{\sqrt{n}},\frac{S_n^{(2)}}{\sqrt{n}(1+\frac{\mu_n}{n})},\frac{W_n^*}{\sqrt{n}(1+\frac{\mu_n}{n})}\bigg)\,.
\end{align*}
Thus, using Slutsky's theorem, we conclude the result of Theorem \ref{thm:CLT general Version2} once we have shown that
\begin{align}\label{eq:CLTGeneralProof0}
T_{n} \todistr \tilde N\qquad\text{and}\qquad R_{n}\toprob 0\,,
\end{align}
where $\tilde N\sim\mathcal{N}(0,\tilde\sigma^2)$ is the Gaussian random variable from the statement of Theorem \ref{thm:CLT general Version2}.


We start with the assertion on the sequence $T_n$. First of all, we notice that by Assumption \eqref{eq:AssumptionsWnVersion2}, the multivariate central limit theorem  applied to $(S_n^{(1)}, S_n^{(2)})$,
and the continuous mapping theorem,
\[
T_n = {S_n^{(1)}\over qM_p(q)} - {S_n^{(2)}\over p(1+\frac{\mu_n}{n})} - {W_n^*\over p(1+\frac{\mu_n}{n})} \todistr {\xi\over qM_p(q)}-{\eta\over p(1+\mu)} - \frac{\zeta}{p(1+\mu)} =: \tilde N\,,
\]
where $\zeta\sim\mathcal N(0,\tau^2)$ is independent of the centered Gaussian random vector $(\xi,\eta)$ in $\R^2$ with covariance matrix $\Sigma$ given by
\begin{align*}
\Sigma = \begin{pmatrix}
M_p(2q)-M_p(q)^2 & M_p(p+q)-M_p(q) \\
M_p(p+q)-M_p(q) & M_p(2p)-1
\end{pmatrix}.
\end{align*}
The limiting variable $\tilde N$ is centered Gaussian. To compute its variance, observe that
\begin{align*}
\Var\bigg({\xi\over qM_p(q)}-{\eta\over p(1+\mu)}\bigg) & = {M_p(2q)-M_p(q)^2\over q^2M_p(q)^2} + {M_p(2p)-1\over p^2(1+\mu)^2} - 2\,{M_p(p+q)-M_p(q)\over pqM_p(q)(1+\mu)}. 
\end{align*}
Thus, the limiting variance is given by
\begin{align*}
\tilde \sigma^2
&= \Var \tilde N = {M_p(2q)-M_p(q)^2\over q^2M_p(q)^2} + {M_p(2p)-1\over p^2(1+\mu)^2} - 2{M_p(p+q)-M_p(q)\over pqM_p(q)(1+\mu)} + \frac{\tau^2}{p^2(1+\mu)^2}\\
&=
{\Gamma({1\over p})\Gamma({2q+1\over p})\over q^2\Gamma({q+1\over p})^2}-{1\over q^2} + \frac{1}{p(1+\mu)^2} - \frac{2}{p(1+\mu)} +\frac{\tau^2}{p^2(1+\mu)^2},
\end{align*}
where the second line follows by recalling~\eqref{eq:Mpq} and performing computations with gamma functions.

To show that $R_{n}\toprob0$, as $n\to\infty$, we can in principle follow the lines of the proof of Theorem \ref{thm:CLT}, but we have to replace the terms $S_n^{(2)}\over\sqrt{n}$ and $W_n\over n$ there by $S_n^{(2)}\over\sqrt{n}(1+{\mu_n\over n})$ and $W_n^*\over \sqrt{n}(1+{\mu_n\over n})$, respectively. In particular, in a first step this results in showing that both sequences converge in distribution to $0$, that is for every fixed $\delta>0$,
$$
\Pro\bigg[{S_n^{(2)}\over\sqrt{n}(1+\frac{\mu_n}{n})}\geq{\sqrt{\delta\over 3}}\,\bigg]\to 0\qquad\text{and}\qquad\Pro\bigg[{W_n^*\over \sqrt{n}(1+\frac{\mu_n}{n})}\geq {\sqrt{\delta\over 3}}\,\bigg]\to 0\,,
$$
as $n\to\infty$. Both claims easily follow from the Slutsky theorem after recalling that both $S_n^{(2)}$ and $W_n^*$ converge in distribution to normal random variables, and that $1 + \frac{\mu_n}{n} \to 1+\mu$.

Moreover, in a second step one needs to argue that for any fixed $\varepsilon,M>0$,
\begin{align}\label{eq:CLTGeneralProof1}
\Pro\bigg[{(S_n^{(1)})^2\over \sqrt{n}}+{(S_n^{(2)})^2\over \sqrt{n}(1+\frac{\mu_n}{n})^2}+{(W_n^*)^2\over \sqrt{n}(1+\frac{\mu_n}{n})^2}>{\varepsilon\over M}\bigg]\to 0\,,
\end{align}
as $n\to\infty$. Recall that all three sequences $S_n^{(1)}, S_n^{(2)}, W_n^*$ converge in distribution to normal random variables. For the former two sequences, this follows from the central limit theorem, whereas the claim for $W_n^*$ is a consequence of our assumption~\eqref{eq:AssumptionsWnVersion2}. Again by a Slutsky-type argument, the sequences $(S_n^{(1)})^2/\sqrt n$, $(S_n^{(2)})^2/(\sqrt{n}(1+\frac{\mu_n}{n})^2)$ and $(W_n^*)^2/(\sqrt{n}(1+\frac{\mu_n}{n})^2)$  converge to zero in probability, hence so does their sum. This establishes \eqref{eq:CLTGeneralProof1} and hence \eqref{eq:CLTGeneralProof0}, which completes the proof of Theorem \ref{thm:CLT general Version2}. \hfill $\Box$

\subsection{Proof of the moderate deviations principle (Theorem \ref{thm:MDP})}\label{subsec:ProofMDP}

Let $(b_n)_{n\in\N}$ be a sequence of positive real numbers such that $b_n=\omega(1)$ and $b_n=o(\sqrt{n})$. As in the proof of the central limit theorem, we consider the sequence of random variables
\begin{align*}
V_n = \sqrt{n}\bigg({n^{{1/p}-{1/q}}\over M_p(q)^{1/q}}\|Z_n\|_q-1\bigg)
\end{align*}
and observe that Lemma \ref{lem:ProbRep} implies
\begin{align*}
{V_n\over b_n} &= {\sqrt{n}\over b_n}\bigg({n^{{1/p}-{1/q}}\over M_p(q)^{1/q}}\|Z_n\|_q-1\bigg)\\
&\overset{d}{=}{S_n^{(1)}\over b_n\,qM_p(q)}-{S_n^{(2)}\over p\,b_n}-{W_n\over p\,b_n\sqrt{n}}+{\sqrt{n}\over b_n}\Psi_p\Big({S_n^{(1)}\over\sqrt{n}},{S_n^{(2)}\over\sqrt{n}},{W_n\over n}\Big),
\end{align*}
where $\Psi_p:\R^3\to\R$ is such that $|\Psi_p(x,y,z)|\leq M\|(x,y,z)\|_2^2$ whenever $\|(x,y,z)\|_2^2<\delta$ for some $M,\delta>0$.

\medspace

Our strategy to prove the moderate deviations principle of Theorem \ref{thm:MDP} is as follows:
\begin{itemize}
\item[1.] We prove a bivariate moderate deviations principle for the sequence of rescaled random vectors $b_n^{-1}(S_n^{(1)},S_n^{(2)})$ in $\R^2$.
\item[2.] We apply the contraction principle to deduce a moderate deviations principle for the linear combination $S_n^{(1)}/(b_n\,qM_p(q))-S_n^{(2)}/(p\,b_n)$.
\item[3.] We show that the sequence of random variables $V_n/b_n$ is exponentially equivalent to the sequence formed in step 2.
\end{itemize}

We start with the first step of the proof.

\begin{lemma}[Bivariate MDP]\label{lem:bivariateMDP}
Fix $0<p< \infty$ and $0<q<\infty$ with $q<p$. Let $(b_n)_{n\in\N}$ be a sequence of positive real numbers such that $b_n=\omega(1)$ and $b_n=o(\sqrt{n})$ and consider the random vectors
\begin{align}\label{eq:DefSn}
S_n := {1\over\sqrt{n}}\sum_{i=1}^n\big(|Y_i|^q-M_p(q),|Y_i|^p-1\big).
\end{align}
Then the sequence of random vectors $S_n/b_n$ satisfies an MDP on $\R^2$ with speed $b_n^2$ and good rate function
$$
\rate_1(x,y)=-{p^{1-2q/p}\Gamma({1\over p})^2\over 2c_{p,q}}\,x^2-\Bigg({\Gamma({1\over p})\Gamma({1+2q\over p})\over 2c_{p,q}}-{\Gamma({1+q\over p})^2\over 2c_{p,q}}\Bigg)y^2+{p^{-q/p}\Gamma({1\over p})\Gamma({1+q\over p})\over c_{p,q}}\,xy\,,
$$
where $c_{p,q}:=(p+q^2)\Gamma({1+q\over p})^2-p\Gamma({1\over p})\Gamma({1+2q\over p})$.
\end{lemma}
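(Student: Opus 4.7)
The plan is to apply the MDP for sums of i.i.d.\ random vectors (Lemma \ref{lem:MDPVector}) directly to the centered i.i.d.\ random vectors $X_i := (|Y_i|^q - M_p(q),\ |Y_i|^p - 1)$, $i\in\N$. Since $S_n = \tfrac{1}{\sqrt n}\sum_{i=1}^n X_i$ and hence $S_n/b_n = \tfrac{1}{b_n\sqrt n}\sum_{i=1}^n X_i$, I would choose $s_n := b_n\sqrt n$; the hypotheses $b_n = \omega(1)$ and $b_n = o(\sqrt n)$ translate precisely into $s_n = \omega(\sqrt n)$ and $s_n = o(n)$, and the resulting MDP speed is $s_n^2/n = b_n^2$, as required.

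To invoke Lemma \ref{lem:MDPVector}, three properties of $X_1$ must be verified. Centeredness is immediate since $\E|Y_1|^q = M_p(q)$ and $\E|Y_1|^p = M_p(p) = 1$, and the covariance matrix
\[
\bC = \begin{pmatrix}
M_p(2q)-M_p(q)^2 & M_p(p+q)-M_p(q)\\
M_p(p+q)-M_p(q) & M_p(2p)-1
\end{pmatrix}
\]
is read off from \eqref{eq:Moments}. Invertibility of $\bC$ is a routine calculation using \eqref{eq:Mpq}: one finds $M_p(2p)-1=p$ and $M_p(p+q)-M_p(q) = q\,M_p(q)$, which after a short manipulation leads to $\det \bC = -p^{2q/p}\,c_{p,q}/\Gamma(1/p)^2$, a quantity that does not vanish.

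The one step that really needs care is the exponential moment condition, and it is precisely here that the hypothesis $q<p$ becomes essential. After centering, this reduces to showing the finiteness of
\[
\E\, e^{\lambda_1 |Y_1|^q + \lambda_2 |Y_1|^p} = \frac{1}{c_p}\int_{-\infty}^{\infty} e^{\lambda_1 |y|^q + (\lambda_2 - 1/p)|y|^p}\,\dint y
\]
for $(\lambda_1,\lambda_2)$ in a neighborhood of the origin. For $|\lambda_2|$ small one has $\lambda_2 - 1/p < 0$, and because $q<p$ the $|y|^p$ term dominates as $|y|\to\infty$ for \emph{any} value of $\lambda_1$, so the integral converges. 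This is the only place the assumption $q<p$ is genuinely used, and I expect it to be the main obstacle; the moment generating function would cease to be finite on any neighborhood of the origin when $q>p$.

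Once Lemma \ref{lem:MDPVector} applies, it delivers the claimed MDP with rate function $x\mapsto \tfrac12\langle x, \bC^{-1}x\rangle$. The closed form of $\rate_1$ is then obtained by writing out the inverse of the explicit $2\times 2$ matrix $\bC$ and expressing the three coefficients in terms of $c_{p,q}$ using the same gamma function identities already employed for $\det\bC$. This is a purely mechanical reduction that contains no new idea beyond the Gaussian-integral calculations already performed.
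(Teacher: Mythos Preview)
Your proposal is correct and follows essentially the same route as the paper: apply Lemma \ref{lem:MDPVector} with $s_n=b_n\sqrt{n}$ to the centered i.i.d.\ vectors $X_i=(|Y_i|^q-M_p(q),|Y_i|^p-1)$, use $q<p$ to secure finiteness of the moment generating function near the origin, and then read off the quadratic rate function $\tfrac12\langle x,\bC^{-1}x\rangle$ before rewriting it via the gamma-function identities. Your account is in fact slightly more explicit than the paper's, since you spell out the invertibility of $\bC$ via $\det\bC=-p^{2q/p}c_{p,q}/\Gamma(1/p)^2$ and the auxiliary identities $M_p(2p)-1=p$ and $M_p(p+q)-M_p(q)=qM_p(q)$.
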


%

\begin{proof}
First, we observe that $S_n$ is a sum of centered i.\,i.\,d.\,random vectors in $\R^2$ with covariance matrix
\begin{equation}\label{eq:CovarianceMatrix}
{\bf C} = \begin{pmatrix}
c_{11} & c_{12} \\ c_{21} & c_{22}
\end{pmatrix}
\end{equation}
given by
\begin{align*}
c_{11} &= \Var(|Y_1|^q-M_p(q)) = M_p(2q) - M_p(q)^2,\\
c_{22} &= \Var(|Y_1|^q-1) = M_p(2p)-1,\\
c_{12} = c_{21} &= \Cov(|Y_1|^q-M_p(q),|Y_1|^q-1) = M_p(p+q)-M_p(q).
\end{align*}
The moment generating function of the random vector $(|Y_1|^q-M_p(q),|Y_1|^p-1)$ on $\R^2$ is given by
$$
M(\lambda,\mu) = \frac{1}{2p^{1/p}\Gamma\big(1+\frac{1}{p}\big)}\int_{\R}e^{\lambda(|x|^q-M_p(q))+\mu(|x|^p-1)-{|x|^p\over p}}\,\dint x\,.
$$
Since $q<p$, the function $M$ is finite on $\R\times(-\infty,1/p)$, a set which contains the origin $(0,0)\in\R^2$ in its interior. Therefore, Lemma \ref{lem:MDPVector} (with the choice $s_n=\sqrt{n}b_n$ there) implies that the sequence of random variables $S_n/b_n$ satisfies an MDP on $\R^2$ with speed $b_n^2$ and good rate function
$$
\rate_1(x,y) = {1\over 2}\big\langle(x,y)^T,{\bf C}^{-1}(x,y)^T\big\rangle={1\over 2(c_{11}c_{22}-c_{12}^2)}\big(c_{22}x^2+c_{11}y^2-2c_{12}xy\big).
$$
Inserting the values for $c_{11},c_{22}$ and $c_{12}=c_{21}$, and simplifying the resulting expression proves the claim.
\end{proof}

\begin{rmk}
\rm In the previous proof we used our assumption that $q<p$ in order to verify the finiteness of certain exponential moments. As already discussed in Remark \ref{rem:MDPExMom} above, there exist version of the MDP for sums of independent random vectors not requiring the finiteness of such exponential moments. However, also when applying such weaker versions from \cite{ArconesMDP}, for example, the assumption that $q<p$ is in fact needed.
\end{rmk}

We continue with the second step and use the contraction principle to obtain an MDP the linear combinations of $S_n^{(1)}$ and $S_n^{(2)}$.

\begin{lemma}[MDP for the core term]\label{lem:MDPcoreterms}
Let $(b_n)_{n\in\N}$ be s sequence of positive real numbers such that
$b_n=\omega(1)$ and $b_n=o(\sqrt{n})$. Then the sequence of random variables
$$
{S_n^{(1)}\over b_n\,qM_p(q)}-{S_n^{(2)}\over p\,b_n}
$$
satisfies an MDP on $\R$ with speed $b_n^2$ and good rate function $\rate_2(t)=t^2/(2\sigma^2)$, where $\sigma^2$ is the constant in Theorem \ref{thm:CLT}.
\end{lemma}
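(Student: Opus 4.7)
The plan is to obtain this one-dimensional MDP by projecting the bivariate MDP of Lemma~\ref{lem:bivariateMDP} through the linear map
\[
F:\R^2\to\R,\qquad F(x,y)={x\over qM_p(q)}-{y\over p},
\]
which is continuous. Observing that $F(S_n/b_n)=S_n^{(1)}/(b_n\,qM_p(q))-S_n^{(2)}/(p\,b_n)$, Lemma~\ref{lem:contraction principle} (the contraction principle) applied to the MDP from Lemma~\ref{lem:bivariateMDP} immediately yields that this sequence satisfies an MDP on $\R$ with speed $b_n^2$ and good rate function
\[
\rate_2(t)=\inf\bigl\{\rate_1(x,y):(x,y)\in\R^2,\ F(x,y)=t\bigr\}.
\]

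The remaining task is to identify this infimum explicitly. Since $\rate_1(x,y)=\tfrac12\langle (x,y)^T,\bC^{-1}(x,y)^T\rangle$ is a positive definite quadratic form with matrix $\bC^{-1}$ (where $\bC$ is the covariance matrix in \eqref{eq:CovarianceMatrix}), and $F$ is a linear functional with coefficient vector $a=\bigl(\tfrac{1}{qM_p(q)},-\tfrac{1}{p}\bigr)^T$, a standard Lagrange-multiplier computation shows that
\[
\inf_{F(x,y)=t}\tfrac12\langle (x,y)^T,\bC^{-1}(x,y)^T\rangle={t^2\over 2\,a^T\bC a}.
\]

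Thus $\rate_2(t)=t^2/(2\sigma^2)$ with $\sigma^2=a^T\bC a$, and it only remains to check that this value of $\sigma^2$ coincides with the one in Theorem~\ref{thm:CLT}. Expanding,
\[
\sigma^2={c_{11}\over q^2M_p(q)^2}+{c_{22}\over p^2}-{2c_{12}\over pqM_p(q)},
\]
and substituting the explicit expressions $c_{11}=M_p(2q)-M_p(q)^2$, $c_{22}=M_p(2p)-1$, and $c_{12}=M_p(p+q)-M_p(q)$, together with $M_p(p)=1$ and the definition~\eqref{eq:Mpq} of $M_p(\cdot)$, reproduces the variance ${1\over q^2}\bigl({\Gamma(1/p)\Gamma((2q+1)/p)/\Gamma((q+1)/p)^2}-1\bigr)-{1\over p}$ from Theorem~\ref{thm:CLT}. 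This calculation is the same one that appears in Section~\ref{subsec:ProofCLT} when computing $\Var(\xi/(qM_p(q))-\eta/p)$, so it can be quoted rather than repeated.

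No step in this argument is a serious obstacle: the bivariate MDP has already been established in Lemma~\ref{lem:bivariateMDP}, the contraction principle is a black box, and the variance identification is an algebraic verification that has been carried out earlier in the proof of the central limit theorem.
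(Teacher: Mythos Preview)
Your proof is correct and follows essentially the same approach as the paper: apply the contraction principle to the bivariate MDP of Lemma~\ref{lem:bivariateMDP} via the linear functional $F$, then identify the resulting constrained minimum of the quadratic form. Your use of the closed-form identity $\inf_{a^T x=t}\tfrac12 x^T\bC^{-1}x=t^2/(2\,a^T\bC a)$ is a cleaner way to carry out what the paper does by writing out the Lagrange equations explicitly, and your observation that $a^T\bC a$ is exactly the variance already computed in Section~\ref{subsec:ProofCLT} is a nice shortcut.
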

\begin{proof}
Consider the continuous function
\begin{equation}\label{eq:DefFunctionG}
G:\R^2\to\R,\quad (x,y)\mapsto {x\over qM_p(q)}-{y\over p}
\end{equation}
and observe that, for each $n\in\N$, the random variable ${S_n^{(1)}\over b_n\,qM_p(q)}-{S_n^{(2)}\over p\,b_n}$ has the same distribution as $G(S_n/b_n)$, where $S_n$ was defined in Lemma \ref{lem:bivariateMDP}. Thus, the contraction principle (see Lemma \ref{lem:contraction principle}) implies the desired MDP with speed $b_n^2$ and good rate function
$$
\rate_2(t) = \inf\big\{\rate_1(x,y):(x,y)\in\R^2\text{ such that }G(x,y)=t\big\},\quad t\in\R.
$$
This optimization problem leads us to the Lagrangian
\[
\mathcal{L}(x,y,\lambda) = \rate_1(x,y) + \lambda(G(x,y)-t)
\]
and the Lagrange multiplier equations
\begin{enumerate}
\item[(i)] $\frac{c_{22}}{c_{11}c_{22}-c_{12}^2}\,x - \frac{c_{12}}{c_{11}c_{22}-c_{12}^2}\,y + \frac{\lambda}{qM_p(q)} = 0$,
\item[(ii)] $\frac{c_{11}}{c_{11}c_{22}-c_{12}^2}\,y - \frac{c_{12}}{c_{11}c_{22}-c_{12}^2}\,x - \frac{\lambda}{p} = 0$,
\item[(iii)] $\frac{x}{qM_p(q)} - \frac{y}{p} - t = 0$,
\end{enumerate}
where $c_{11}$, $c_{22}$, and $c_{12}$ are the entries of the covariance matrix given by \eqref{eq:CovarianceMatrix}.
This yields the critical value
\[
\lambda = t\cdot\left[ \frac{\frac{c_{12}}{p c_{11}} - \frac{1}{qM_p(q)} }{qM_p(q)\Big(\frac{c_{22}}{c_{11}c_{22}-c_{12}^2} - \frac{c_{12}^2}{c_{11}(c_{11}c_{22}-c_{12}^2)}\Big)} -
\frac{\frac{1}{p} - \frac{c_{12}}{qM_p(q) c_{22}} }{p\Big(\frac{c_{11}}{c_{11}c_{22}-c_{12}^2} - \frac{c_{12}^2}{c_{22}(c_{11}c_{22}-c_{12}^2)}\Big)}\right]^{-1}
\]
and from a direct (but tedious) computation, we obtain the explicit quadratic form of the rate function. We refrain from providing the details of the computation.
\end{proof}

We will now proceed with the third step and prove the exponential equivalence. In what follows, we let the random vectors $S_n$ be as in \eqref{eq:DefSn}, the random variables $V_n$ as in \eqref{eq:DefV_n}, and the function $G$ be given by \eqref{eq:DefFunctionG}.

\begin{lemma}[Exponential equivalence - MDP]\label{lem:ExpEquivalence}
Let $(b_n)_{n\in\N}$ be s sequence of positive real numbers such that $b_n=\omega(1)$ and $b_n=o(\sqrt{n})$. Then the sequences of random variables $G(S_n/b_n)$ and $V_n/b_n$ are exponentially equivalent.
\end{lemma}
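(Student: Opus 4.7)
The plan is to combine the expansion from Lemma~\ref{lem:ProbRep} with assumption~\eqref{eq:AssumptionMDP} and Chernoff-type tail bounds for the centered partial sums. Writing
\begin{align*}
\frac{V_n}{b_n} - G\!\left(\frac{S_n}{b_n}\right) \stackrel{d}{=} -\frac{W_n}{p\,b_n\sqrt{n}} + \frac{\sqrt{n}}{b_n}\,\Psi_p\!\left(\frac{S_n^{(1)}}{\sqrt{n}},\frac{S_n^{(2)}}{\sqrt{n}},\frac{W_n}{n}\right),
\end{align*}
a union bound reduces the task to showing that both $\Pro\big[W_n/(p\,b_n\sqrt{n}) > \delta/2\big]$ and $\Pro\big[(\sqrt{n}/b_n)\,|\Psi_p(\cdot)|>\delta/2\big]$ decay faster than $e^{-Kb_n^2}$ for every $K>0$. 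The first of these probabilities is immediate from~\eqref{eq:AssumptionMDP} with $\delta'=p\delta/2$.

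For the second probability I would partition the sample space according to whether $(S_n^{(1)}/\sqrt n, S_n^{(2)}/\sqrt n, W_n/n)$ lies inside the ball of radius $\delta_0$ on which the quadratic estimate $|\Psi_p(x,y,z)|\leq M\|(x,y,z)\|_2^2$ is valid. On the complement of the ball, the probability is dominated by the three fixed-threshold events $\{|S_n^{(i)}|/\sqrt n \geq \delta_0/\sqrt 3\}$ for $i\in\{1,2\}$ together with $\{W_n/n \geq \delta_0/\sqrt 3\}$; Cram\'er's theorem (Lemma~\ref{lem:cramer}) controls the first two at speed $n$ (the MGFs of $|Y_1|^q - M_p(q)$ and $|Y_1|^p - 1$ being finite near the origin thanks to $q<p$), and the third is eventually contained in $\{W_n > b_n\sqrt n\}$ since $b_n\sqrt n = o(n)$, hence is controlled by~\eqref{eq:AssumptionMDP}. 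Inside the ball the quadratic bound yields
\begin{align*}
\frac{\sqrt n}{b_n}\,|\Psi_p(\cdot)| \leq M\left(\frac{(S_n^{(1)})^2}{b_n\sqrt n}+\frac{(S_n^{(2)})^2}{b_n\sqrt n}+\frac{W_n^2}{b_n n^{3/2}}\right),
\end{align*}
so exceeding the target $\delta/2$ forces one of the three summands to lie above a constant. For the $S_n^{(i)}$-terms this translates to thresholds on $|S_n^{(i)}|/\sqrt n$ of order $\sqrt{b_n/\sqrt n}\to 0$, which are handled by a non-asymptotic Chernoff estimate exploiting the quadratic lower bound on the Legendre transform near the origin; the resulting decay rate is of order $b_n\sqrt n$, which dominates $b_n^2$ since $\sqrt n/b_n\to\infty$. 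The $W_n$-term requires $W_n$ to exceed $\mathrm{const}\cdot b_n^{1/2}n^{3/4}$, which is again eventually contained in $\{W_n > b_n\sqrt n\}$ (as $n^{1/4}/b_n^{1/2}\to\infty$), so \eqref{eq:AssumptionMDP} applies one final time.

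The main obstacle is the careful matching of scales in the quadratic remainder: the thresholds on $|S_n^{(i)}|$ and on $W_n$ depend on $n$ through $b_n$, so asymptotic large-deviation statements at a fixed level are insufficient, and one must instead rely on non-asymptotic Chernoff bounds combined with the quadratic lower bound on the rate function near zero. The hypothesis $q<p$ intervenes precisely at this step, since it is exactly what guarantees that the moment generating function of $|Y_1|^q - M_p(q)$ is finite in a neighbourhood of the origin.
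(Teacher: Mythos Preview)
Your argument is correct and follows the same overall skeleton as the paper: write $V_n/b_n - G(S_n/b_n)$ as the sum of the $W_n/(p b_n\sqrt n)$ term and the $\Psi_p$ remainder, control the first directly via assumption~\eqref{eq:AssumptionMDP}, and control the second using the local quadratic bound on $\Psi_p$ together with exponential tail estimates for the empirical means.

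The one noteworthy difference is in how the $\Psi_p$ remainder is handled. The paper factors $|\Psi_p| = \|(\cdot)\|_2^2\cdot\big(|\Psi_p|/\|(\cdot)\|_2^2\big)$ and splits the event $\{\frac{\sqrt n}{b_n}|\Psi_p|>\varepsilon/2\}$ accordingly, so that every resulting event carries a \emph{fixed} (i.e.\ $n$-independent) threshold on $S_n^{(i)}/\sqrt n$ or $W_n/n$; these are then dispatched by Cram\'er's theorem at speed $n$, which automatically beats $b_n^2$. Your inside/outside-the-ball decomposition is equally valid but leads, inside the ball, to \emph{vanishing} thresholds of order $\sqrt{b_n/\sqrt n}$ on $S_n^{(i)}/\sqrt n$, which you then handle by the non-asymptotic Chernoff bound combined with the quadratic behaviour of the Legendre transform at the origin. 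This is a perfectly sound alternative; it is slightly more hands-on, but has the advantage of making the rate-matching transparent (decay $e^{-c b_n\sqrt n}$ against speed $b_n^2$). Your identification of where $q<p$ enters (finiteness of the MGF of $|Y_1|^q$) is exactly right and matches the paper's use of this hypothesis.
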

\begin{proof}
We start by recalling that, for each $n\in\N$,
\begin{align*}
V_n \overset{d}{=} {S_n^{(1)}\over qM_p(q)} - {S_n^{(2)}\over p} - {W_n\over p\,\sqrt{n}} + \sqrt{n}\,\Psi_p\Big({S_n^{(1)}\over\sqrt{n}},{S_n^{(2)}\over\sqrt{n}},{W_n\over n}\Big)
\end{align*}
and
\begin{align*}
G\Big(\frac{S_n}{b_n}\Big) \overset{d}{=} {S_n^{(1)}\over b_nqM_p(q)} - {S_n^{(2)}\over b_np} \,.
\end{align*}
Let us fix $\varepsilon>0$. We observe that
\begin{align*}
\Pro\bigg[\Big|{V_n\over b_n}-G\Big({S_n\over b_n}\Big)\Big|>\varepsilon\bigg] \leq \Pro\bigg[{W_n\over pb_n\sqrt{n}}>{\varepsilon\over 2}\bigg]+\Pro\bigg[{\sqrt{n}\over b_n}\Big|\Psi_p\Big({S_n^{(1)}\over\sqrt{n}},{S_n^{(2)}\over\sqrt{n}},{W_n\over n}\Big)\Big|>{\varepsilon\over 2}\Big|\bigg],
\end{align*}
where we used that $W_n$ is a non-negative random variable for each $n\in\N$. The function $\Psi_p$ is the same as in Lemma \ref{lem:ProbRep}. Assumption \eqref{eq:AssumptionMDP} (with $\delta = p\frac{\varepsilon}{2}$ there) implies
$$
\limsup_{n\to\infty}{1\over b_n^2}\log\Pro\Big[{W_n\over pb_n\sqrt{n}}>{\varepsilon\over 2}\Big] = \limsup_{n\to\infty}{1\over b_n^2}\log\bW_n\Big(\Big({\varepsilon\over 2}\,p\,b_n\,\sqrt{n},\infty\Big)\Big) =-\infty.
$$
To discuss the second term, we first write
\begin{align*}
\Pro\bigg[{\sqrt{n}\over b_n}\Big|\Psi_p\Big({S_n^{(1)}\over\sqrt{n}},{S_n^{(2)}\over\sqrt{n}},{W_n\over n}\Big)\Big|>{\varepsilon\over 2}\Big|\bigg] \leq \Pro\bigg[\Big\|\Big({S_n^{(1)}\over\sqrt{n}},{S_n^{(2)}\over\sqrt{n}},{W_n\over n}\Big)\Big\|_2^2>{\varepsilon\over 2M}\bigg]+\Pro\Bigg[{\sqrt{n}\over b_n}{\big|\Psi_p\big({S_n^{(1)}\over\sqrt{n}},{S_n^{(2)}\over\sqrt{n}},{W_n\over n}\big)\big|\over \big\|\big({S_n^{(1)}\over\sqrt{n}},{S_n^{(2)}\over\sqrt{n}},{W_n\over n}\big)\big\|_2^2}>M\Bigg],
\end{align*}
where $M\in(0,\infty)$ is the parameter from Lemma \ref{lem:ProbRep}.
For the first summand in the previous expression, we obtain the estimate
\begin{align*}
\Pro\bigg[\Big\|\Big({S_n^{(1)}\over\sqrt{n}},{S_n^{(2)}\over\sqrt{n}},{W_n\over n}\Big)\Big\|_2^2>{\varepsilon\over 2M}\bigg] &= \Pro\bigg[{(S_n^{(1)})^2\over n}+{(S_n^{(2)})^2\over n}+{W_n^2\over n^2}>{\varepsilon\over 2M}\bigg]\\
&\leq \Pro\bigg[{S_n^{(1)}\over\sqrt{n}}>{\sqrt{\varepsilon\over 6M}}\,\bigg]+\Pro\bigg[{S_n^{(2)}\over\sqrt{n}}>\sqrt{{\varepsilon\over 6M}}\,\bigg]+\Pro\bigg[{W_n\over n}>\sqrt{{\varepsilon\over 6M}}\,\bigg].
\end{align*}
The first two terms both decay like $e^{-cn}$ for a suitable $c\in(0,\infty)$ by Cram\'er's theorem (see Lemma \ref{lem:cramer}). For the last term, we use again condition \eqref{eq:AssumptionMDP} (with $\delta=\sqrt{\varepsilon/(6M)}$ there) and obtain
\begin{align*}
\limsup_{n\to\infty}{1\over b_n^2}\log\Pro\bigg[{W_n\over n}>\sqrt{{\varepsilon\over 6M}}\,\bigg] &= \limsup_{n\to\infty}{1\over b_n^2}\log\bW_n\bigg(\Big(n\sqrt{{\varepsilon \over 6M}},\infty\Big)\bigg)\\
&\leq \limsup_{n\to\infty}{1\over b_n^2}\log\bW_n\bigg(\Big(b_n\sqrt{n}\sqrt{{\varepsilon \over 6M}},\infty\Big)\bigg)=-\infty,
\end{align*}
where we also used that $b_n=o(\sqrt{n})$. As a consequence,
\begin{align*}
\limsup_{n\to\infty}{1\over b_n^2}\log\Pro\bigg[\Big\|\Big({S_n^{(1)}\over\sqrt{n}},{S_n^{(2)}\over\sqrt{n}},{W_n\over n}\Big)\Big\|_2^2>{\varepsilon\over 2M}\bigg] \leq -c\limsup_{n\to\infty}{n\over b_n^2}=-\infty,
\end{align*}
since $b_n=\omega(1)$ and $b_n=o(\sqrt{n})$. Recalling the definition and the properties of the function $\Psi_p$ from Lemma \ref{lem:ProbRep}, we obtain for sufficiently large $n$
\begin{align*}
\Pro\Bigg[{\sqrt{n}\over b_n}{\big|\Psi_p\big({S_n^{(1)}\over\sqrt{n}},{S_n^{(2)}\over\sqrt{n}},{W_n\over n}\big)\big|\over \big\|\big({S_n^{(1)}\over\sqrt{n}},{S_n^{(2)}\over\sqrt{n}},{W_n\over n}\big)\big\|_2^2}>M\Bigg] &\leq \Pro\bigg[{b_n\over\sqrt{n}}\Big\|\Big({S_n^{(1)}\over\sqrt{n}},{S_n^{(2)}\over\sqrt{n}},{W_n\over n}\Big)\Big\|_2^2\geq \delta\bigg]\\
&\leq \Pro\bigg[{S_n^{(1)}\over\sqrt{n}}>\sqrt{{\delta\over3}}\,\bigg]+\Pro\bigg[{S_n^{(2)}\over\sqrt{n}}>\sqrt{{\delta\over3}}\,\bigg]+\Pro\bigg[{W_n\over n}>\sqrt{{\delta\over3}}\,\bigg],
\end{align*}
where we also used that $b_n=o(\sqrt{n})$. Again by Cram\'er's theorem (see Lemma \ref{lem:cramer}), the first two terms decay like $e^{-cn}$ for suitable $c\in(0,\infty)$ and their sum is bounded by $2e^{-cn}$ for sufficiently large $n$. Using this together with assumption \eqref{eq:AssumptionMDP}, we obtain
$$
\limsup_{n\to\infty}{1\over b_n^2}\log \Pro\Bigg[{\sqrt{n}\over b_n}{\big|\Psi_p\big({S_n^{(1)}\over\sqrt{n}},{S_n^{(2)}\over\sqrt{n}},{W_n\over n}\big)\big|\over \big\|\big({S_n^{(1)}\over\sqrt{n}},{S_n^{(2)}\over\sqrt{n}},{W_n\over n}\big)\big\|_2^2}>M\Bigg] =-\infty,
$$
where we used that $b_n=o(\sqrt{n})$. Putting everything together and using \cite[Lemma 1.2.15]{DZ}, we get
$$
\limsup_{n\to\infty}{1\over b_n^2}\log\Pro\bigg[\Big|{V_n\over n}-G\Big({S_n\over b_n}\Big)\Big|>\varepsilon\bigg] = -\infty\,.
$$
Since $\varepsilon>0$ was arbitrary, this shows the exponential equivalence that was claimed in the lemma.
\end{proof}

\begin{proof}[Proof of Theorem \ref{thm:MDP}]
The MDP is now a direct consequence of Lemma \ref{prop:exponentially equivalent} together with the MDP for the core term (see Lemma \ref{lem:MDPcoreterms}) and the exponential equivalence (see Lemma \ref{lem:ExpEquivalence}).
\end{proof}

\subsection{Proof of the large deviations principles (Theorem \ref{thm:LDP})}\label{subsec:ProofLDP}

In this last section we present the proof of the large deviations principles in Theorem \ref{thm:LDP}. On the way, we shall use some results we have obtained in \cite{KPT2019}. In what follows, we assume that for each $n\in\N$, $Y^{(n)}=(Y_1,\dots,Y_n)$ is a vector of independent $p$-generalized Gaussian random variables, and we assume that $(Y^{(n)})_{n\in\N}$ and $(W_n)_{n\in\N}$ are independent.

\subsubsection{The case $q<p$}
We start by recalling that, for each $n\in\N$, we have the distributional equality
\[
n^{1/p-1/q}\|Z_n\|_q \stackrel{d}{=} n^{1/p-1/q} {\|Y^{(n)}\|_q\over(\|Y^{(n)}\|_p^p+W_n)^{1/p}} = {\big({1\over n}\sum_{i=1}^n|Y_i|^q\big)^{1/q}\over\big({1\over n}\sum_{i=1}^n|Y_i|^p+{W_n\over n}\big)^{1/p}},
\]
see the proof of Lemma \ref{lem:ProbRep}. In the proof of Theorem 1.2 in \cite{KPT2019}, we have already seen that the sequence of random vectors
\begin{equation}\label{eq:LDPfromKPT}
\Big({1\over n}\sum_{i=1}^n|Y_i|^q,{1\over n}\sum_{i=1}^n|Y_i|^p\Big)
\end{equation}
satisfies an LDP on $\R^2$ with speed $n$ and a good rate function $\rate_1(t_1,t_2)$. More precisely, thanks to Cram\'er's theorem (see Lemma \ref{lem:cramer}) $\rate_1$ can be identified as the Legendre-Fenchel transform $\Lambda^*$ of the function
\begin{equation}\label{eq:Lambda}
\Lambda(t_1,t_2) = \log\int_0^\infty e^{t_1 x^q+(t_2-1/p)x^p}{\dint x\over p^{1/p}\Gamma(1+1/p)}.
\end{equation}
Since $(Y^{(n)})_{n\in\N}$ and $(W_n)_{n\in\N}$ are assumed to be independent and since $(W_n/n)_{n\in\N}$ satisfies an LDP with speed $n$ and good rate function $\rate_{\bW}$, the sequence of random vectors
$$
\Big({1\over n}\sum_{i=1}^n|Y_i|^q,{1\over n}\sum_{i=1}^n|Y_i|^p,{W_n\over n}\Big)
$$
satisfies an LDP on $\R^3$ with good rate function $\rate_2$ given by
$$
\rate_2(t_1,t_2,t_3) = \rate_1(t_1,t_2)+\rate_{\bW}(t_3),\quad (t_1,t_2,t_3)\in\R^3,
$$
where we used Lemma \ref{JointRateFunction}. Next, we consider the mapping
$$
F:(0,\infty)\times (0,\infty) \times [0,\infty) \to \R,\quad (t_1,t_2,t_3)\mapsto{t_1^{1/q}\over(t_2+t_3)^{1/p}},
$$
which is continuous on its domain. Clearly,
$$
F\Big({1\over n}\sum_{i=1}^n|Y_i|^q,{1\over n}\sum_{i=1}^n|Y_i|^p,{W_n\over n}\Big) \overset{d}{=} n^{1/p-1/q}\|Z_n\|_q
$$
for each $n\in\N$. Therefore, we can apply the contraction principle (see Lemma \ref{lem:contraction principle}) to conclude that $(n^{1/p-1/q}\|Z_n\|_q)_{n\in\N}$ satisfies an LDP with speed $n$ and good rate function $\rate_{{\bf Z},1}=\rate_2\circ F^{-1}$. This completes the argument.\hfill $\Box$

\begin{rmk}
\rm The assumption that $q<p$ was used only in disguise above and is behind the LDP for the sequence of random vectors in \eqref{eq:LDPfromKPT}. Indeed and as indicated above, the proof of this LDP is based on Cram\'er's theorem, which in turn requires finiteness of some exponential moments, or equivalently, that the origin is an interior point of the domain of the function $\Lambda$ defined in \eqref{eq:Lambda}. However, from the definition of this function it is clear that this can only be the case if $q<p$.
\end{rmk}

\subsubsection{The case $q>p$}
As was shown in the proof of \cite[Theorem 1.3]{KPT2019}, the sequence of random variables
\[
U_n:=\Big(\frac{1}{n}\sum_{i=1}^n |Y_i|^q\Big)^{1/q}
\]
satisfies an LDP with speed $n^{p/q}$ and good rate function
\begin{equation}\label{eq:RateFunctionS}
\rate_{{\bf U}}(x) = \begin{cases}
\frac{1}{p}\big(x^q-M_p(q)\big)^{p/q} & : x\geq M_p(q)^{1/q} \\
+\infty & : \text{otherwise}.
\end{cases}
\end{equation}
We will now prove that the two sequences $(U_n)_{n\in\N}$ and $(n^{1/p-1/q}\|Z_n\|_q)_{n\in\N}$ are exponentially equivalent.

\begin{lemma}[Exponential equivalence - LDP]\label{lem:ExpEquivalenceLDP}
The sequences $(U_n)_{n\in\N}$ and $(n^{1/p-1/q}\|Z_n\|_q)_{n\in\N}$ are exponentially equivalent with rate $n^{p/q}$.
\end{lemma}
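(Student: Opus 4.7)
The plan is to use the probabilistic representation and isolate the factor that distinguishes the two sequences. By Lemma~\ref{lem:prob representation} and the manipulation already performed in the proof for the case $q<p$,
\[
n^{1/p-1/q}\|Z_n\|_q \;\stackrel{d}{=}\; \frac{U_n}{(A_n+B_n)^{1/p}}, \qquad A_n:=\tfrac{1}{n}\sum_{i=1}^n|Y_i|^p,\quad B_n:=\tfrac{W_n}{n}.
\]
So it will suffice to understand the difference
\[
n^{1/p-1/q}\|Z_n\|_q - U_n \;\stackrel{d}{=}\; U_n\bigl[(A_n+B_n)^{-1/p}-1\bigr],
\]
and show that the probability that this exceeds $\delta$ decays faster than $e^{-cn^{p/q}}$ for every $c>0$.

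I would then fix an auxiliary parameter $\eta>0$ (to be sent to $0$ at the end) and work on the good event $G_\eta=\{|A_n-1|\le\eta,\,B_n\le\eta\}$. Since $x\mapsto x^{-1/p}$ is smooth near $x=1$, there is a constant $K=K(p)$ such that on $G_\eta$ we have $|(A_n+B_n)^{-1/p}-1|\le K\eta$, and therefore
\[
\bigl\{|n^{1/p-1/q}\|Z_n\|_q-U_n|>\delta\bigr\}\cap G_\eta \;\subseteq\; \{U_n>\delta/(K\eta)\}.
\]
This yields the three-term bound
\[
\Pro\bigl[|n^{1/p-1/q}\|Z_n\|_q-U_n|>\delta\bigr]\le \Pro[U_n>\delta/(K\eta)] + \Pro[|A_n-1|>\eta] + \Pro[B_n>\eta].
\]

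Now I estimate each summand on the scale $n^{p/q}$. For the first, the LDP for $U_n$ with speed $n^{p/q}$ and rate function $\rate_{\bf U}$ given in~\eqref{eq:RateFunctionS} yields
\[
\limsup_{n\to\infty}\frac{1}{n^{p/q}}\log\Pro[U_n>\delta/(K\eta)] \le -\frac{1}{p}\bigl((\delta/(K\eta))^q-M_p(q)\bigr)^{p/q},
\]
which tends to $-\infty$ as $\eta\to 0$. For the second, Cramér's theorem (Lemma~\ref{lem:cramer}) applies to the i.i.d.\ average $A_n$, giving a bound of the form $e^{-cn}$; since $p/q<1$ we have $n=\omega(n^{p/q})$, so this term contributes $-\infty$ on the scale $n^{p/q}$ automatically. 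For the third, the hypothesis~\eqref{eq:AssumptionLDP} gives exactly $\limsup n^{-p/q}\log\Pro[W_n/n>\eta]=-\infty$. Combining via the elementary principle $\limsup n^{-p/q}\log(P_1+P_2+P_3)\le \max_i \limsup n^{-p/q}\log P_i$ and letting $\eta\downarrow 0$ completes the proof.

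The only delicate point is the coupling between the size of $U_n$ and the closeness of $(A_n+B_n)^{1/p}$ to $1$: if one merely truncates $U_n$ at a fixed threshold, the truncated event does not have the right large-deviations cost. The trick that makes everything work is that $\eta$ controls both the threshold for $U_n$ (through $\delta/(K\eta)$) and the closeness of the denominator, so sending $\eta$ to $0$ simultaneously kills the error in the denominator and drives the LDP cost for $U_n$ to $+\infty$.
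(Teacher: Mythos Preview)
Your argument is correct and follows essentially the same route as the paper: factor out $U_n$, split into the event that $U_n$ is large versus the event that $(A_n+B_n)^{-1/p}$ is far from $1$, control the latter via Cram\'er's theorem for $A_n$ together with assumption~\eqref{eq:AssumptionLDP} for $B_n$, control the former via the LDP for $U_n$, and finally send the auxiliary parameter to zero. The only cosmetic difference is the order of the decomposition: the paper first bounds $\Pro[|1-(A_n+B_n)^{-1/p}|>\varepsilon]$ and then translates this into separate conditions on $A_n$ and $B_n$ (via the splitting $(1-\varepsilon)^{-p}=A_1(\varepsilon)+A_2(\varepsilon)$), whereas you bound $|A_n-1|$ and $B_n$ separately from the outset and deduce the bound on $|(A_n+B_n)^{-1/p}-1|$ via a Lipschitz estimate on the good event; your version is marginally cleaner in that it avoids the auxiliary splitting.
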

\begin{proof}
As we have seen in the proof of Lemma \ref{lem:ProbRep}, one has that
\[
n^{1/p-1/q}\|Z_n\|_q \stackrel{d}{=} n^{1/p-1/q} {\|Y^{(n)}\|_q\over(\|Y^{(n)}\|_p^p+W_n)^{1/p}}
\]
for each $n\in\N$.
Let $\eta\in(0,\infty)$. Then, for every $\varepsilon\in(0,1)$, we obtain
\begin{align*}
& \Pro\left[\Bigg| \Big(\frac{1}{n}\sum_{i=1}^n |Y_i|^q\Big)^{1/q} - \frac{\Big(\frac{1}{n}\sum_{i=1}^n |Y_i|^q\Big)^{1/q}}{\Big(\frac{1}{n}\sum_{i=1}^n |Y_i|^p+\frac{W_n}{n}\Big)^{1/p}}  \Bigg| > \eta \right] \cr
& = \Pro\left[ \Big(\frac{1}{n}\sum_{i=1}^n |Y_i|^q\Big)^{1/q}\, \Bigg| 1 - \frac{1}{\Big(\frac{1}{n}\sum_{i=1}^n |Y_i|^p+\frac{W_n}{n}\Big)^{1/p}}  \Bigg| > \eta \right] \cr
& \leq \Pro\bigg[ \Big(\frac{1}{n}\sum_{i=1}^n |Y_i|^q\Big)^{1/q} > \frac{\eta}{\varepsilon}\bigg] +  \Pro\left[ \Bigg| 1 - \Big(\frac{1}{n}\sum_{i=1}^n |Y_i|^p+\frac{W_n}{n}\Big)^{-1/p}  \Bigg| > \varepsilon \right] \cr
& \leq \Pro\bigg[ \Big(\frac{1}{n}\sum_{i=1}^n |Y_i|^q\Big)^{1/q} > \frac{\eta}{\varepsilon}\bigg] +  \Pro\left[ 1 - \Big(\frac{1}{n}\sum_{i=1}^n |Y_i|^p+\frac{W_n}{n}\Big)^{-1/p} > \varepsilon \right] \cr
& \qquad\qquad\qquad + \Pro\left[ 1 - \Big(\frac{1}{n}\sum_{i=1}^n |Y_i|^p+\frac{W_n}{n}\Big)^{-1/p} < -\varepsilon \right]\,.
\end{align*}
Let us consider the second term. Write
$$
(1-\varepsilon)^{-p} = \Big({1\over 2}+{1\over 2}(1-\varepsilon)^{-p}\Big)+\Big({1\over 2}(1-\varepsilon)^{-p}-{1\over 2}\Big) =: A_1(\varepsilon) + A_2(\varepsilon)
$$
and note that $A_1(\varepsilon)>1$ and $A_2(\varepsilon)>0$ for all $\varepsilon\in(0,1)$.  This leads to the estimate
\begin{align*}
\Pro\left[ 1 - \Big(\frac{1}{n}\sum_{i=1}^n |Y_i|^p+\frac{W_n}{n}\Big)^{-1/p} > \varepsilon \right] & = \Pro \bigg[ \frac{1}{n}\sum_{i=1}^n |Y_i|^p+\frac{W_n}{n} > (1-\varepsilon)^{-p} \bigg] \cr
& \leq \Pro \bigg[ \frac{1}{n}\sum_{i=1}^n |Y_i|^p > A_1(\varepsilon) \bigg] + \Pro \bigg[ \frac{W_n}{n} > A_2(\varepsilon) \bigg]\,.
\end{align*}
By Cram\'er's theorem, the first term in the previous line decays exponentially like $e^{-c_1n}$, since $A_1(\varepsilon)>1$ for all $\varepsilon\in(0,1)$. In fact, the rate function in the corresponding LDP does not vanish in $O\setminus\{1\}$, where $O\subset \R$ is an open neighborhood of $1$, which implies that the constant $c_1$ stays strictly positive when letting $\varepsilon\to 0$. In combination with our Assumption \eqref{eq:AssumptionLDP} (applied with $\delta=A_2(\varepsilon)$) this shows that
\begin{align*}
&\limsup_{n\to\infty}{1\over n^{p/q}}\log \Pro\left[ 1 - \Big(\frac{1}{n}\sum_{i=1}^n |Y_i|^p+\frac{W_n}{n}\Big)^{-1/p} > \varepsilon \right] \\
&\qquad\leq \limsup_{n\to\infty}\Big(-\frac{c_1 n}{n^{p/q}}\Big)  + \limsup_{n\to\infty}{1\over n^{p/q}}\log \Pro \bigg[ \frac{W_n}{n} > A_1(\varepsilon) \bigg] \\
&\qquad= -\lim_{n\to\infty}{c_1 n\over n^{p/q}} + \limsup_{n\to\infty}{1\over n^{p/q}}\log \Pro \bigg[ \frac{W_n}{n} > A_1(\varepsilon) \bigg]\\
&\qquad =-\infty\,,
\end{align*}
In the first inequality above, we have used the elementary fact (see, e.g., \cite[Lemma 1.2.15]{DZ}) that for families of non-negative real numbers $a_1(\delta), a_2(\delta)$, $\delta>0$ one has that
\begin{align*}
\limsup_{\delta\to 0}\delta \log\big(a_1(\delta) + a_2(\delta)\big) &  = \max\Big\{ \limsup_{\delta\to 0}\delta\log a_1(\delta), \limsup_{\delta\to 0}\delta\log a_2(\delta) \Big\} \\
& \leq \limsup_{\delta\to 0}\delta\log a_1(\delta) + \limsup_{\delta\to 0}\delta\log a_2(\delta)\,.
\end{align*}
The remaining term
\[
\Pro\left[ 1 - \Big(\frac{1}{n}\sum_{i=1}^n |Y_i|^p+\frac{W_n}{n}\Big)^{-1/p} < -\varepsilon \right]
\]
can be treated in the same way.

Putting everything together, we obtain from the LDP for the sequence $(U_n)_{n\in\N}$ that
\begin{align*}
& \limsup_{n\to\infty} \frac{1}{n^{p/q}} \log \Pro\left[\Bigg| \Big(\frac{1}{n}\sum_{i=1}^n |Y_i|^q\Big)^{1/q} - \frac{\Big(\frac{1}{n}\sum_{i=1}^n |Y_i|^q\Big)^{1/q}}{\Big(\frac{1}{n}\sum_{i=1}^n |Y_i|^p+\frac{W_n}{n}\Big)^{1/p}}  \Bigg| > \eta \right] \cr
& \leq \limsup_{n\to\infty} \frac{1}{n^{p/q}} \log \Pro\bigg[ \Big(\frac{1}{n}\sum_{i=1}^n |Y_i|^q \Big)^{1/q} > \frac{\eta}{\varepsilon}\bigg] \cr
& = \begin{cases}
-\frac{1}{p}\Big(\big(\frac{\eta}{\varepsilon}\big)^q-M_p(q)\Big)^{p/q} & : \frac{\eta}{\varepsilon} \geq M_p(q)^{1/q} \cr
-\infty & : \text{otherwise}.
\end{cases}
\end{align*}
When $\varepsilon\to 0 $, the expression above tends to $-\infty$. Hence, the two sequences $(U_n)_{n\in\N}$ and $(n^{1/p-1/q}\|Z_n\|_q)_{n\in\N}$ are indeed exponentially equivalent.
\end{proof}

\begin{proof}[Proof of Theorem \ref{thm:LDP}, part (2) ]
The proof of is now a direct consequence of Lemma \ref{lem:ExpEquivalenceLDP} combined with the fact that $(U_n)_{n\in\N}$ satisfies an LDP with speed $n^{p/q}$ and good rate function $\rate_{\bf U}$ given by \eqref{eq:RateFunctionS}.
\end{proof}

\subsection*{Acknowledgement}
We would also like to thank Nicola Turchi for exchanges about the topics of this paper.\\
ZK has been supported by the German Research Foundation under Germany's Excellence Strategy EXC 2044 – 390685587, Mathematics M\"unster: Dynamics - Geometry - Structure. JP has been supported by a \textit{Visiting International Professor Fellowship} from the Ruhr University Bochum and its Research School PLUS, by the Austrian Science Fund (FWF) Project F5508-N26, which is part of the Special Research Program ``Quasi-Monte Carlo Methods: Theory and Applications'', and by the FWF Project P32405 ``Asymptotic Geometric Analysis and Applications''. ZK and CT have been supported by the DFG Scientific Network \textit{Cumulants, Concentration and Superconcentration}.

\bibliographystyle{plain}
\bibliography{limit_thm}

\end{document}